\documentclass[english]{article}
\usepackage{geometry}
\geometry{verbose,tmargin=2cm,bmargin=2cm,lmargin=2cm,rmargin=2cm}
\usepackage{booktabs}
\usepackage{float}
\usepackage{mathrsfs}
\usepackage{amsmath}
\usepackage{amssymb}
\usepackage{graphicx}
\usepackage{amsfonts}
\usepackage{bm}
\usepackage{subfigure}
\usepackage{amsthm}
\usepackage{algorithm}
\usepackage{epsfig}
\makeatletter

\@ifundefined{definecolor}{\@ifundefined{definecolor}
 {\@ifundefined{definecolor}
 {\usepackage{color}}{}
}{}
}{}

\usepackage{subfig}\usepackage[all]{xy}

\textwidth   5.5in \textheight  8in \oddsidemargin  0.3in
\evensidemargin 0.3in \topmargin 0in
\numberwithin{equation}{section}
\theoremstyle{plain}
\newtheorem{thm}{Theorem}
\newtheorem{rmk}{Remark}
\newtheorem{lem}{Lemma}

\newtheorem{cor}{Corollary} 
\newcommand{\bx}{{\bf x}}

\newcommand{\cF}{{\cal F}}
\newcommand{\cG}{{\cal G}}
\newcommand{\cH}{{\cal H}}

\newcommand{\bX}{{\bf X}}
\newcommand{\bY}{{\bf Y}}
\newcommand{\wht}{\widehat}
\newcommand{\wtd}{\widetilde}

\newcommand{\wX}{\widetilde X}

\newcommand{\wbX}{\widetilde {\bf X}}

\newcommand{\bk}{{\bf k}}

\newcommand{\bi}{{\bf i}}
\newcommand{\srp}{\stackrel{p}{\rightarrow}}
\newcommand{\mZ}{\mathsf{Z}}


%
%
%
%
\usepackage{babel}\date{}

\usepackage{babel}

\makeatother

\usepackage{babel}

\begin{document}

\begin{center}

{\Large \textbf{Theory of Parallel Particle Filters for Hidden Markov Models}}

\vspace{0.5cm}

BY HOCK PENG CHAN, CHIANG WEE HENG \& AJAY JASRA

{\footnotesize Department of Statistics \& Applied Probability,
National University of Singapore, Singapore, 117546, SG.}\\
{\footnotesize E-Mail:\,}\texttt{\emph{\footnotesize stachp@nus.edu.sg}},  \texttt{\emph{\footnotesize a0068363@nus.edu.sg}}, \texttt{\emph{\footnotesize staja@nus.edu.sg}}
\end{center}

\begin{abstract}
The objective of this article is to study the asymptotic behavior of 
a new particle filtering approach in the context of
hidden Markov models (HMMs). In particular, we develop an algorithm 
where the latent-state sequence is segmented into multiple shorter portions,
with an estimation technique based upon a separate particle filter in each portion.
The partitioning
facilitates the use of parallel processing. Based upon this approach, we introduce new estimators of the latent states and likelihood
which have similar or better variance properties compared to estimators derived from standard particle filters. 
Moreover due to parallelization 
there is less wall-clock computational time.
We show that
the likelihood function estimator is unbiased, prove central limit theorem convergences of estimators, and 
provide consistent in-sample 
estimation of the asymptotic variances. 
The theoretical analyses, supported by a numerical study, show
that the segmentation reduces the variances in smoothed latent-state estimators, in addition to the savings in wall-clock time.\\
\textbf{Key Words:} CLT;  parallel processing; SMC; standard error estimation.
\end{abstract}

\section{Introduction}

Hidden Markov models are a flexible class of statistical models that are applied
in a wide variety of applications such as bioinformatics, economics, engineering and finance;
see \cite{cappe} for an introduction.
Mathematically a HMM corresponds to a pair of discrete-time processes $X_t\in\mathsf{X}$, $Y_t\in\mathsf{Y}$, with the observed $Y_t$
conditionally independent given $X_t$ and the unobserved $X_t$ obeying a first-order Markov chain
\begin{equation} \label{hmm}
(X_t|X_{t-1}=x) \sim P_\theta(\cdot|x), \quad (Y_t|X_t=x) \sim G_\theta(\cdot|x), \quad t \geq 1
\end{equation}
with densities, $p_\theta$ and $g_\theta$ with respect to 
dominating measures on their state-spaces and $\theta$ is 
a static parameter. 

From an inferential perspective, we are interested in the conditional distribution
of $X_t$ given all the observations up to and perhaps after time $t$. 
This has a wide-range of interpretations, particularly
in real-time applications. In addition there is much practical interest in the calculation
of the likelihood of the observations, for model comparison and parameter
estimation. The difficulty with the afore-mentioned objectives is that the exact computation
of the conditional distribution or likelihood is typically not
possible, as often the high-dimensional integrals that it depends on are often
difficult to evaluate. In practice Monte
Carlo-based numerical methods are adopted, in particular the method
of particle filters or equivalently sequential Monte Carlo (SMC);
see \cite{doucet} for an overview.

SMC methods can be described as a collection of techniques that approximate a sequence of distributions, known up to normalizing constants, of increasing dimensions, and
are often applied to HMMs.
SMC methods combine  importance sampling and resampling to approximate distributions. The idea is to introduce a sequence of proposal densities and sequentially simulate a collection of $K>1$ samples, termed particles, in parallel from these proposals. In most scenarios
it is not possible to use the distribution of interest as a proposal. 
Therefore one must correct for the discrepancy between proposal and target via importance weights. 
As the variance of these importance weights can potentially increase exponentially with algorithmic time, resampling is applied to control it. 
Resampling consists of sampling with replacement from the current samples using the weights and then resetting them to $K^{-1}$. The theoretical properties of SMC 
with regards to their convergence as $K$ grows
are well-studied; see \cite{CL13,Cho04,Del04,DM08,Kun05}. 

In recent years,
the applicability of SMC techniques has been enhanced by parallel computation; see 
\cite{lee}. One of the main bottlenecks in the application of parallel computation to
SMC methods is the resampling step, a major requirement for the method to be efficient.
This has led to a number of researchers investigating methodologies 
that reduce the degree of interaction in
SMC algorithms; see \cite{lee1,lindsten,wh}. 
This work is complementary to the afore-mentioned references,
and is a methodology designed to assist in the parallelization of SMC algorithms, while attempting to retain 
their attractive properties.
Our objective is to study the asymptotic behavior of HMM estimators
when the latent-state sequence is segmented into multiple shorter portions,
by applying an estimation technique based upon a separate particle filter in each portion.
The partitioning
facilitates the use of parallel processing. Based upon this approach, we introduce new SMC-based estimators of the 
latent states (that is, expectations w.r.t.~the filter and smoother) and likelihood
with similar or better variance properties compared to standard SMC estimators, but due to parallelization can
be calculated in less wall-clock computational time. In particular we show:

\begin{itemize}
\item unbiasedness of our likelihood estimate,
\item central limit convergences of the likelihood and latent-state estimates,
\item consistent estimation of asymptotic variances.
\end{itemize}

Our likelihood estimates can be used in conjunction with
recent advances in SMC methodology
in which particle filter processing is just one component of a two-layered
process when learning $\theta$ in a Bayesian manner: particle MCMC (PMCMC) \cite{ADH10}, SMC$^2$ \cite{CJP12},
MCMC substitution \cite{CL14}. That is, our procedure can be routinely used in the context of these works.
In principle,
there is no need to break up the observation sequence into
strictly disjoint segments, it can be advantageous to include additional observations
at the edges of the segments to smooth out the joining of the sample paths. 
We shall illustrate this in the numerical study in Section 3.3.2. 

We describe the algorithm and estimators in Secton 2 and the asymptotic theory in Section 3,
with an illustration of variance reduction in smoothed latent-state estimators. 
We discuss refinements in Section 4. The technical proofs are consolidated in the Appendix.

\section{Independent particle filters for segmented data}\label{sec:alg_note}

Let $\bY_U =(Y_1, \ldots, Y_U)$ for some $U > 1$. 
As the observation sequence is conditionally
independent given the latent-state sequence, the density 
of $\bX_U := (X_1, \ldots, X_U)$ conditioned on $\bY_U$ is given by
\begin{equation} \label{tp}
p_{\theta}(\bx_U|\bY_U) = \prod_{t=1}^U [p_\theta(x_t|x_{t-1}) g_\theta(Y_t|x_t)] \Big/ \lambda(\theta),
\end{equation}
where $\lambda(\theta) [=\lambda(\bY_U|\theta)]$ is the likelihood function that
normalizes $p_\theta(\cdot|\bY_U)$ so that it integrates to 1,
and $p_\theta(x_t|x_{t-1})$ for $t=1$ denotes $p_\theta(x_1)$.

Let $\bx_t = (x_1, \ldots, x_t)$ and 
let $q_t(\cdot|\bx_{t-1})$ be an importance density of $X_t$, 
with $q_t(\cdot|\bx_{t-1})$ for $t=1$ denoting
$q_1(\cdot)$. We shall require that $q_t(x_t|\bx_{t-1}) > 0$ whenever $p_\theta(x_t|x_{t-1})>0$. 
For notational simplicity, we assume that $U=MT$
for positive integers $M$ and $T$, so that the latent-state sequence
can be partitioned neatly into $M$ subsequences of equal length $T$.
We shall operate $M$ particle filters independently,
with the $m$th particle filtering generating sample paths of 
$\bX_{m,mT}$, where $\bX_{m,t} = (X_{(m-1)T+1}, \ldots, X_t)$. 
Due to the independent nature of the particle filters, we require that for $(m-1)T < t \leq mT$, $q_t(\cdot|\bx_{t-1})$
does not depend on $\bx_{(m-1)T}$. 
We can thus express $q_t(\cdot|\bx_{t-1})$ as $q_t(\cdot|\bx_{m,t-1})$. 

Let $w_t(\bx_t)$ be the positive resampling weights of a sample path $\bx_t$, and
again due to the independent nature of the particle filters, we shall require that for $(m-1)T < t \leq mT$, 
$w_t(\bx_t)$ does not depend on $\bx_{(m-1)T}$, 
and express $w_t(\bx_t)$ also as $w_t(\bx_{m,t})$. 

\subsection{Approach}

We shall apply 
standard multinomial resampling at every stage, as proposed in the seminal paper 
\cite{GSS93}; this is not necessary from a methodological point of view, but we will analyze this case. 
In the case of a single particle filter, it is common to adopt 
$$
q_t(x_t|\bx_{t-1}) = p_\theta(x_t|x_{t-1}), \quad\quad w_t(x_t) = g_{\theta}(Y_t|x_t), 
$$
but it need not be the case, and we can in general let
\begin{equation} \label{wtxt}
w_t(\bx_t) = \frac{g_{\theta}(Y_t|x_t)p_\theta(x_t|x_{t-1})}{q_t(x_t|\bx_{t-1})}.
\end{equation}
Therefore the single particle filter targets (up-to proportionality), after resampling at stage $t$:
$$\prod_{u=1}^t [g_\theta(Y_u|x_u) p_\theta(x_u|x_{u-1})].
$$
For the parallel particle filters, the $m$th particle filter, after resampling at stage $(m-1)T < t \leq mT$, targets
\begin{eqnarray} \label{eq:second_target}
\pi_{m,t}(\bx_{m,t}) & \propto &  
r_m(x_{(m-1)T+1})g_{\theta}(Y_{(m-1)T+1}|x_{(m-1)T+1}) \\ \nonumber
& & \quad \times \prod_{u=(m-1)T+2}^t [g_{\theta}(Y_u|x_u)p_\theta(x_u|x_{u-1})],
\end{eqnarray}
where $r_m(\cdot)$ is a positive probability on $\mathsf{X}$ which can be evaluated up to a constant,
and is independent from output of the others filters; sensible choices of $r_m(\cdot)$  
are suggested in Section 3.3.2. For $m=1$, we can simply let $r_m(x_1) = 
p_\theta(x_1)$. 
The forms of the target and proposal mean that the particle 
filters can be run in parallel with each other.
Therefore $w_t$ has the form given in (\ref{wtxt}), 
with the exceptions that when $t=(m-1)T+1$,
$p_\theta(x_t|x_{t-1})$ is replaced by $r_m(x_t)$.

The particle filter approach is given below. It is remarked that some of the notations, for example $H_t^k$,
are not needed to run the particle filter but
will help to facilitate the subsequent theoretical analysis.
For $1 \leq m  \leq M$:

\smallskip
{\bf Particle filter $\mathbf{m}$ (PF$\mathbf{m}$)}. Recursively at stages 
$t=(m-1)T+1,\ldots,mT$: 

\begin{enumerate}
\item {\it Importance sampling}. For $1 \leq k \leq K$, sample
$\wX_t^k \sim q_t(\cdot|\bX_{m,t-1}^k)$ and let $\wbX_{m,t}^k = \wX_t^k$ if $t=(m-1)T+1$, 
$\wbX_{m,t}^k = (\bX_{m,t-1}^k,
\wX_t^k)$ otherwise.

\item {\it Resampling}. Generate i.i.d. $B_t^1, \ldots,$ $B_t^K$ [$B(1), \ldots, B(K)$ for short] such that
\begin{equation} \label{R1}
P \{ B(1)=j \} = w_t(\wbX_{m,t}^j)/(K \bar w_t) (= W_t^j), 
\end{equation} 
where $\bar w_t = K^{-1} \sum_{k=1}^K
w_t(\wbX_{m,t}^k)$.

\item {\it Updating}. Let $(\bX_{m,t}^k, A_{m,t}^k) = 
(\wbX_{m,t}^{B(k)}, A_{m,t-1}^{B(k)})$, 
\begin{equation} \label{PF1}
\wtd H_{m,t}^j = H_{m,t-1}^j/(K W_t^j) \mbox{ and } H_{m,t}^k = \wtd H_{m,t}^{B(k)}, \quad
1 \leq k \leq K,
\end{equation}
with the conventions $A_{m,(m-1)T}^k=k$, $H_{m,(m-1)T}^k=1$.
\end{enumerate}

\begin{rmk}
There are other procedures that use two or more particle filters to perform estimation. For instance, \cite{persing} introduces
a method based upon generalized two-filter smoothing. However, that approach is restricted to two particle filters that 
run forwards and backwards,
and requires the choice of pseudo-densities, which may be more difficult than the choice of $r_m(\cdot)$.
The approach of \cite{verge} uses multiple particle filters to perform estimation, but is different from the ideas in this article. Typically that approach will
run filters in parallel on the same target and allow the filters themselves to interact. In our approach, we 
are able to reduce variability (relative to one particle filter) of estimates
by segmentation, which is possibly not achieved in \cite{verge}. 
\end{rmk}

\subsection{Notations}

Set $\eta_0=1$ and assume that
\begin{equation} \label{eta}
\eta_t := E_q \Big[ \prod_{u=1}^t w_u(\bX_u) \Big] < \infty \mbox{ for } 1 \leq t \leq U,
\end{equation}
where for (integrable) $\varphi:\mathsf{X}^{t}\rightarrow\mathbb{R}$,
\begin{equation} \label{Eqphi}
E_q \varphi(\bX_t) = \int_{\mathsf{X}^{t}}
\varphi(\bx_t)
\Big[ \prod_{u=1}^t q_u(x_u|\bx_{u-1}) \Big] d \bx_t.
\end{equation}

Consider $(m-1)T < t \leq mT$. Define $\eta_{m,t} = \eta_t/\eta_{(m-1)T}$,
and let
\begin{equation}  
\label{h1} h_t(\bx_t) = \eta_t \Big/ \prod_{u=1}^t w_u(\bx_u), \quad
h_{m,t}(\bx_{m,t}) = \eta_{m,t} \Big/ 
\prod_{u=(m-1)T+1}^t w_u(\bx_{m,u}).
\end{equation}
By (\ref{R1})--(\ref{h1}), 
\begin{equation} \label{Ht}
H_{m,t}^k = \Big( \frac{\bar w_{(m-1)T+1} \cdots \bar w_t}{\eta_{m,t}} \Big)
h_{m,t}(\bX_{m,t}^k).
\end{equation}

Let $\mZ^m (=\mZ_K^m)= \{ (k(1), \ldots, k(m)): 1 \leq k(n) \leq K$ for $1 \leq n \leq m \}$.
For $\bk \in \mZ^m$, let
\begin{eqnarray} \label{XH}
\wbX_t^{\bk} & = & (\bX_{1,T}^{k(1)}, \ldots, \bX_{m-1,(m-1)T}^{k(m-1)},
\wbX_{m,t}^{k(m)}), \quad 
\wtd H_t^{\bk} = \Big( \prod_{n=1}^{m-1} H_{n,nT}^{k(n)} \Big)
\wtd H_{m,t}^{k(m)}, \\ \nonumber
\bX_t^{\bk} & = & (\bX_{1,T}^{k(1)}, \ldots, \bX_{m-1,(m-1)T}^{k(m-1)},
\bX_{m,t}^{k(m)}), \quad 
H_t^{\bk} = \Big( \prod_{n=1}^{m-1} H_{n,nT}^{k(n)} \Big)
H_{m,t}^{k(m)}. 
\end{eqnarray}
Thus analogous to (\ref{Ht}),
\begin{equation} \label{Htk}
H_t^{\bk} = \Big( \frac{\bar w_1 \cdots \bar w_t}{\eta_t} \Big)
h_t(\bX_t^{\bk}).
\end{equation}

The notation
$A_{m,t}^k$ refers to the first-generation ancestor of $\bX_{m,t}^k$ (or $\wbX_{m,t+1}^k$). 
That is $A_t^k=j$ if the first component of $\bX_{m,t}^k$ 
is $\wtd X_{m-1)T+1}^j$. This
ancestor tracing is exploited in Sections 3.3.3 for standard error approximations of the estimates.
Finally $N(\mu,\sigma^2)$ denotes the normal distribution with mean $\mu$ and variance $\sigma^2$.

\section{Estimation theory}\label{sec:estimation}

We are interested in the estimation of the likelihood $\lambda(\theta)$, and also of 
$\psi_U := E_p[\psi(\bX_U)|\bY_U]$ for some real-valued measurable function $\psi$.
Here $E_p$ denotes expectation under the HMM (\ref{hmm}). The estimation of $\lambda(\theta)$ falls under
the canonical case; the theory is given in Section \ref{sec:can_nc}. The estimation of $\psi_U$ falls
under the non-canonical case; the theory is given in 
Section~\ref{sec:nccan_at}. 

\subsection{Estimates and Remarks}

\subsubsection{Canonical Case}\label{sec:can_case}

Define the function
\begin{equation} \label{Lx}
L(\bx_U) = p_\theta(\bx_U|\bY_U) \Big/ \prod_{t=1}^U q_t(x_t|\bx_{t-1}),
\end{equation}
where $p_\theta(\bx_U|\bY_U)$ is as \eqref{tp}.
The estimator of $\psi_U$ in the canonical case, 
which we will prove is unbiased, is given by 
\begin{equation} \label{hatpsi}
\wht \psi_U = K^{-M} \sum_{\bk \in \mZ^M} L(\bX_U^{\bk}) \psi(\bX_U^{\bk}) H_U^{\bk}.
\end{equation}
By (\ref{tp}) and (\ref{Lx}), $\lambda(\theta)$ appears in the denominator on 
the R.H.S. of (\ref{hatpsi}). This does not pose a problem in
the estimation of $\lambda(\theta)$, for by setting $\psi \equiv \lambda(\theta)$, we cancel out $\lambda(\theta)$.
We define $\wht \lambda(\theta)$ to be the estimator obtained this way, that is
\begin{equation} \label{lhat}
\wht \lambda(\theta)/\lambda(\theta) = K^{-M} \sum_{\bk \in \mZ^M} 
L(\bX_U^{\bk}) H_U^{\bk}.
\end{equation}

To further understand this estimate, we rewrite (\ref{lhat}) as
\begin{equation}
\wht \lambda(\theta) = 
\Big( \prod_{t=1}^U \bar w_t \Big) \prod_{m=2}^M 
\Big( K^{-2} \sum_{k=1}^K \sum_{\ell=1}^K \frac{p_{\theta}(X_{(m-1)T+1}^{\ell}|
X_{(m-1)T}^{k})}{r_m(X_{(m-1)T+1}^{\ell})} \Big),
\label{eq:lam_clar}
\end{equation}
where $X_{(m-1)T+1}^{\ell}$ here refers to the first component of $X_{m,mT}^{\ell}$.
A heuristic justification is as follows. 
For expositional purposes, let us consider the simplest case of $M=2$. 
The final term on the R.H.S.~of \eqref{eq:lam_clar}, the double summation,
is an SMC estimate of the ratio, up to a constant,
of the actual target of interest \eqref{tp}, 
and the normalized target (\ref{eq:second_target}) that is sampled by the two particle filters. That is as $K\rightarrow
\infty$, we would like to obtain convergence to
$$
\int_{\mathsf{X}^U}  
\frac{p_{\theta}(x_{T+1}|
x_{T})}{r_2(x_{T+1})} \pi_{1,T}(\bx_T)\pi_{2,U}(\bx_{2,U}) d\bx_U.
$$
The term $\big( \prod_{t=1}^U \bar w_t \big)$ will converge in probability to the normalizing constants of $\pi_{2,U}(\bx_{2,U})$.

The expression \eqref{eq:lam_clar} also suggests a good choice of $r_2(\cdot)$. If we take
$$
r_2(x) = K^{-1} \sum_{k=1}^K p_{\theta}(x|
x_{T}^{k}),
$$
then the double sum on the R.H.S.~of \eqref{eq:lam_clar} is exactly 1; that is, it does not
contribute to the variance of the estimate. The choice above is exactly the SMC approximation
of the predictor. However, the choice suggested above is not reasonable in that it circumvents
the parallel implementation of the two filters. However we should thus choose  $r_2(\cdot)$ to approximate the predictor.
This will be illustrated in Section 3.3.2. 
We will also discuss in Section 4 how
subsampling can be used to reduce the computational cost of calculating $\wht \lambda(\theta)$ to 
$\mathcal{O}(K)$. 

\subsubsection{Non-Canonical Case}

In the case of latent-state estimation under the non-canonical case, 
the unknown $\lambda(\theta)$ inherent in (\ref{hatpsi}) is replaced by 
$\wht \lambda(\theta)$, that is we divide the R.H.S. of (\ref{hatpsi}) and (\ref{lhat}) to obtain the estimator 
\begin{equation} \label{tpsi}
\wtd \psi_U = \sum_{\bk \in \mZ^M} L(\bX_U^{\bk}) \psi(\bX_U^{\bk}) H_U^{\bk} \Big/ \sum_{\bk \in \mZ^M} L(\bX_U^{\bk}) H_U^{\bk}.
\end{equation}
We can rewrite the above estimate in a standard form seen in the literature, 
and reduce the cost of computation to $\mathcal{O}(K)$. For example, if there is only one particle filter and we select $w_t$ to satisfy
(\ref{wtxt}), then 
the estimate reduces to $K^{-1} \sum_{k=1}^K \psi(\bX_U^k)$,
which is the standard estimate in the literature.

\subsection{Unbiasedness and CLT under the canonical case}\label{sec:can_nc}

Let $f_0 = \psi_U$ and define, 
\begin{equation} \label{fu}
f_t(\bx_t) = E_q[\psi(\bX_U) L(\bX_U)|\bX_t=\bx_t], \quad 1 \leq t \leq U,
\end{equation}
where $E_q$ denotes expectation with respect to the importance densities $q_t$ as defined in (\ref{Eqphi}).
There is no resampling involved under $E_q$. 
Let $\#_t^k$ denotes the number of copies of $\wbX_t^k$ generated from
$(\wbX_t^1, \ldots, \wbX_t^K)$ to form $(\bX_t^1, \ldots, \bX_t^K)$. Thus conditionally,
$(\#_t^1, \ldots, \#_t^K) \sim$ Multinomial $(K, W_t^1, \ldots, W_t^K)$. Let $\cF_{2t-1}$ and
$\cF_{2t}$ denote the $\sigma$-algebras generated by all random variables just before 
and just after resampling respectively, 
at the $t^{\textrm{th}}$ stage. In the case of $(m-1)T < t \leq mT$ for $m>1$, these $\sigma$-algebras include 
all random variables generated by PF1 to PF$(m-1)$.
Let $E_K$ denote expectation with respect to $K$ sample paths generated in each particle filter.

\begin{thm} \label{thm1} Define, for $(m-1)T < t \leq mT$ and $1 \leq j \leq K$,
\begin{eqnarray} \label{e2}
\epsilon_{2t-1}^j & = & K^{-m+1} \sum_{\bk \in \mZ^m: A_{m,t-1}^{k(m)}=j} [f_t(\wbX_t^{\bk})-f_{t-1}(\bX_{t-1}^{\bk})] H_{t-1}^{\bk}, \\
\nonumber \epsilon_{2t}^j & = & K^{-m+1} \sum_{\bk \in \mZ^m: A_{m,t-1}^{k(m)}=j} (\#_t^{k(m)}-KW_t^{k(m)}) f_t(\bX_t^{\bk}) 
\wtd H_t^{\bk}.
\end{eqnarray}
Then for each $j$ and $m$, $\{ \epsilon_u^j, \cF_u, 2(m-1)T < u 
\leq 2mT \}$ is a martingale difference sequence, and
\begin{equation} \label{MD}
K(\wht \psi_U-\psi_U) = \sum_{m=1}^M \sum_{j=1}^K (
\epsilon_{2(m-1)T+1}^j + \cdots + \epsilon_{2mT}^j).
\end{equation}
Therefore $E_K \wht \psi_U = \psi_U$.
\end{thm}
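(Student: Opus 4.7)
My plan is to telescope $K(\wht\psi_U-\psi_U)$ through intermediate statistics representing the state of the particle system before and after each importance-sampling and resampling step, and then to identify each telescoping increment---split by first-generation ancestor---with a sum of the $\epsilon$'s in the statement. Concretely, for $(m-1)T\le t\le mT$ I will introduce $\phi_t = K^{-m}\sum_{\bk\in\mZ^m} f_t(\bX_t^{\bk}) H_t^{\bk}$ (post-resampling at stage $t$) and an analogous $\wtd\phi_t$ built from the pre-resampling samples $\wbX_t^{\bk}$ weighted by $H_{t-1}^{\bk}$. Since $f_U(\bx_U)=\psi(\bx_U)L(\bx_U)$ gives $\phi_U=\wht\psi_U$, while $\phi_0=f_0=\psi_U$, the problem reduces to matching the elementary increments $K(\wtd\phi_t-\phi_{t-1})$ and $K(\phi_t-\wtd\phi_t)$ with $\sum_j\epsilon_{2t-1}^j$ and $\sum_j\epsilon_{2t}^j$ respectively, and then checking the conditional mean-zero property.

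For the two matchings, the importance-sampling step is immediate once one partitions $\mZ^m$ by $A_{m,t-1}^{k(m)}$, while the resampling step is handled by the re-indexing $\sum_{\bk} f_t(\bX_t^{\bk}) H_t^{\bk}=\sum_{\bk}\#_t^{k(m)} f_t(\wbX_t^{\bk})\wtd H_t^{\bk}$ combined with the weight identity $KW_t^{k(m)}\wtd H_t^{\bk}=H_{t-1}^{\bk}$ read off directly from (\ref{R1})--(\ref{PF1}). The martingale property then reduces to two standard facts. Conditionally on $\cF_{2t-2}$, the only fresh randomness in $\epsilon_{2t-1}^j$ is $\wX_t^{k(m)}\sim q_t(\cdot|\bX_{m,t-1}^{k(m)})$, and the defining tower identity $f_{t-1}(\bx_{t-1})=\int f_t(\bx_{t-1},x)q_t(x|\bx_{m,t-1})\,dx$ (which uses that $q_t$ within PF$m$ depends only on $\bx_{m,t-1}$) makes the increment mean zero. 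Conditionally on $\cF_{2t-1}$, the only random factor in $\epsilon_{2t}^j$ is $\#_t^{k(m)}$, and the multinomial sampling rule gives $E[\#_t^{k(m)}|\cF_{2t-1}]=KW_t^{k(m)}$, exactly cancelling the centring. Unbiasedness then follows by taking $E_K$ of (\ref{MD}) and invoking the $\eta_t<\infty$ integrability of (\ref{eta}) to justify swapping expectation with the finite sum.

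The main obstacle will be the bookkeeping at the boundaries $t=(m-1)T$ between PF$(m-1)$ and PF$m$, where the index set enlarges from $\mZ^{m-1}$ to $\mZ^m$. The conventions $H_{m,(m-1)T}^k=1$ and $A_{m,(m-1)T}^k=k$ in (\ref{PF1}) are exactly what make the enlarged $\phi_{(m-1)T}$ equal the smaller one (the summand being constant in the extra coordinate $k(m)$), and make the ancestor slice $\{\bk:A_{m,t-1}^{k(m)}=j\}$ at $t=(m-1)T+1$ collapse to $\{k(m)=j\}$. Verifying that the telescoping passes cleanly through these boundaries---so that for each fixed $j,m$ the sequence $\{\epsilon_u^j:2(m-1)T<u\le 2mT\}$ really is $\cF_u$-adapted with the correct conditional means, and that the sum over $m$ reassembles into (\ref{MD})---is the delicate piece; once it is sorted, the remaining manipulations are routine.
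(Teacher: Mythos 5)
Your proposal is correct and follows essentially the same route as the paper's proof: the same telescoping decomposition through pre- and post-resampling statistics, the same re-indexing identity $\sum_{\bk:A_{m,t}^{k(m)}=j} f_t(\bX_t^{\bk})H_t^{\bk}=\sum_{\bk:A_{m,t-1}^{k(m)}=j}\#_t^{k(m)}f_t(\wbX_t^{\bk})\wtd H_t^{\bk}$ with $KW_t^{k(m)}\wtd H_t^{\bk}=H_{t-1}^{\bk}$, and the same two conditional-mean-zero computations (tower law under $E_q$ for the sampling step, multinomial mean for the resampling step). Your attention to the boundary conventions $H_{m,(m-1)T}^k=1$, $A_{m,(m-1)T}^k=k$ matches what the paper uses implicitly when summing (\ref{exp1}) over $m$.
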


\begin{proof}
Since $\#_t^k \sim$ Binomial$(K,W_t^k)$ when conditioned on $\cF_{2t-1}$,
by the tower law of conditional expectations,
\begin{eqnarray*}
E_K(\epsilon_{2t-1}^j|\cF_{2t-2}) & = & K^{-m+1} \sum_{\bk \in \mZ^m: A_{t-1}^{k(m)}=j} \{ E_K[f_t(\wbX_t^{\bk}|\cF_{2t-2}]
-f_{t-1}(\bX_{t-1}^{\bk}) \} H_{t-1}^{\bk} = 0, \cr
E_K(\epsilon_{2t}^j|\cF_{2t-1}) & = & K^{-m+1} \sum_{\bk \in \mZ^m: A_{t-1}^{k(m)}=j} [E_K(\#_t^{k(m)}|\cF_{2t-1})-KW_t^{k(m)}]
f_t(\wbX_t^{\bk}) \wtd H_t^{\bk} = 0,
\end{eqnarray*}
therefore $\{ \epsilon_u^j, \cF_u, 2(m-1)T < u 
\leq 2mT \}$ are indeed martingale difference sequences. 

It follows from (\ref{PF1}) and (\ref{XH}) that
\begin{eqnarray*}
& & \sum_{\bk \in \mZ^m:A_{t-1}^{k(m)}=j} (\#_t^{k(m)}-KW_t^{k(m)}) f_t(\wbX_t^{\bk}) \wtd H_t^{\bk} \cr
& = & \sum_{\bk \in \mZ^m: A_t^{k(m)}=j}
f_t(\bX_t^{\bk}) H_t^{\bk}-\sum_{\bk \in \mZ^m:A_{t-1}^{k(m)}=j} f_t(\wbX_t^{\bk}) H_{t-1}^{\bk},
\end{eqnarray*} 
therefore by (\ref{e2}) and the cancellation of terms in a telescoping series,
\begin{eqnarray} \label{esum}
\sum_{u=2(m-1)T+1}^{2mT} \epsilon_u^j & = & K^{-m+1} \sum_{\bk \in
\mZ^m:A_{m,mT}^{k(m)}=j} f_{mT}(\bX_{mT}^{\bk}) H_{mT}^{\bk} \\ \nonumber
& & \quad - K^{-m+1} \sum_{\bk \in
\mZ^{m-1}} f_{(m-1)T}(\bX_{(m-1)T}^{\bk}) H_{(m-1)T}^{\bk}.  
\end{eqnarray}
Therefore 
\begin{eqnarray} \label{exp1}
\sum_{j=1}^K \Big( \sum_{u=2(m-1)T+1}^{2mT} \epsilon_u^j \Big) & = &  
K^{-m+1} \sum_{\bk \in
\mZ^m} f_{mT}(\bX_{mT}^{\bk}) H_{mT}^{\bk} \\ \nonumber
& & \quad - K^{-m+2} \sum_{\bk \in
\mZ^{m-1}} f_{(m-1)T}(\bX_{(m-1)T}^{\bk}) H_{(m-1)T}^{\bk}. 
\end{eqnarray}
By (\ref{hatpsi}), the identity (\ref{MD}) follows from adding (\ref{exp1}) over
$1 \leq m \leq M$, keeping in mind that
$f_0 = \psi_U$ and $f_U(\bx_U) = L(\bx_U) \psi(\bX_U)$. 
\end{proof}

The martingale difference expansion (\ref{MD}) is for the purpose of standard error estimation,
see Section 3.3.3. An alternative expansion, for the purpose of CLT theory 
in the spirit of \cite[Chapter 8]{Del04}, is formed from 
the martingale difference sequence 
$\{ (Z_u^1, \ldots, Z_u^K): 2(m-1)T < u \leq 2mT \}$, where 
\begin{eqnarray} \label{Z2}
Z_{2t-1}^k & = & K^{-m+1} \sum_{\bk \in \mZ^m: k(m)=k} 
[f_t(\wbX_t^{\bk})
-f_{t-1}(\bX_{t-1}^{\bk})] H_{t-1}^{\bk}, \\
\nonumber Z_{2t}^k & = & K^{-m+1} \sum_{\bk \in \mZ^m: k(m)=k} f_t(\bX_t^{\bk})
H_t^{\bk} - K^{-m+1} \sum_{\bk \in \mZ^m} W_t^k f_t(\wbX_t^{\bk}) 
\wtd H_t^{\bk}.
\end{eqnarray}
Analogous to (\ref{MD}), we have the expansion
\begin{equation} \label{MD2}
K(\wht \psi_U-\psi_U) = \sum_{u=1}^{2U} (Z_u^1 + \cdots + Z_u^K),
\end{equation}
 from which we can also conclude that $\wht \psi_U$ is unbiased. 

The technical difficulties in working with (\ref{Z2}) to prove the CLT
is considerably more involved compared to the standard single particle filter, as
there is now a sum over a multi-dimensional space. 
Therefore let us provide some intuitions first, 
focusing on the key arguments in the extension of the CLT to $M=2$ segments.

For $t>T$, let 
$$f_{2,t}(\bx_{2,t}) = E_q[f_t(\bX_t)|\bX_{2,t}=\bx_{2,t}].
$$
By a `law of large numbers' argument, see Lemma \ref{lem2} in Appendix A.1,
\begin{eqnarray} \label{ave} 
K^{-1} \sum_{k=1}^K f_t(\wbX_t^{k \ell}) H_T^k & \doteq & f_{2,t}(\wbX_{2,t}^{\ell}), \\ \nonumber
K^{-1} \sum_{k=1}^K f_t(\bX_t^{k \ell}) H_T^k & \doteq & f_{2,t}(\bX_{2,t}^{\ell}).
\end{eqnarray}
Therefore by (\ref{XH}) and (\ref{Z2}), $Z_u^{\ell} \doteq Z_{2,u}^\ell$ for $2T < u \leq 2U$, where
\begin{eqnarray} \label{Z22}
Z_{2,2t-1}^{\ell} & = & [f_{2,t}(\wbX_{2,t}^\ell)-f_{2,t-1}
(\bX_{2,t-1}^{\ell})] H_{2,t-1}^{\ell}, \\ \nonumber
Z_{2,2t}^{\ell} & = & f_{2,t}(\bX_{2,t}^{\ell}) H_{2,t}^{\ell}
-\sum_{j=1}^K W_t^j f_{2,t}(\wbX_{2,t}^j) \wtd H_{2,t}^j.
\end{eqnarray}
We now have a martingale difference sequence $\{ (Z_u^1, \ldots, Z_u^K),
\cF_v, 1 \leq u \leq 2T \}$ that depends on the outcomes of PF1 only, and another sequence
$\{(Z_{2,u}^1, \ldots, Z_{2,u}^K),$ $\cG_u, 2T < u \leq 2U \}$ that depends on
the outcomes of PF2 only, where $\cG_{2t-1}$ and $\cG_{2t}$ denote the $\sigma$-algebras generated by random
variables in PF2 only, just before and just after resampling respectively, at
stage~$t$. Moreover,
\begin{equation} \label{Zapprox}
K(\wht \psi_U - \psi_U) \doteq \sum_{u=1}^{2T} \Big( \sum_{k=1}^K Z_u^k \Big)
+ \sum_{u=2T+1}^{2U} \Big( \sum_{\ell=1}^K Z_{2,u}^\ell \Big).
\end{equation}
Therefore subject to neligible error in \eqref{Zapprox}, 
$\sqrt{K}(\wht \psi_U-\psi_U)$ is asymptotically normal, with variance the
sum of the variance components due to each particle filter.

More generally in the case of $M$ independent particle filters, define
\begin{equation} \label{f2u}
f_{m,t}(\bx_{m,t}) = E_q[f_t(\bX_t)|\bX_{m,t}=\bx_{m,t}],
\end{equation}
and recall the definition of $h_{m,t}$ in (\ref{h1}).

\begin{thm} \label{thm2} Let $\sigma^2 = \sum_{u=1}^{2U} \sigma_u^2$, where for $(m-1)T <  t \leq mT$,
\begin{eqnarray} \label{v2}
\sigma_{2t-1}^2 & = & E_q \{ [f_{m,t}^2(\bX_{m,t})-f_{m,t-1}^2(\bX_{m,t-1})] 
h_{m,t-1}(\bX_{m,t-1}) \}, \\ \nonumber 
\sigma_{2t}^2 & = & E_q \{ [f_{m,t}(\bX_{m,t}) h_{m,t}(\bX_{m,t})-f_0]^2/h_{m,t}(\bX_{m,t}) \}. 
\end{eqnarray}
Assume that $E_q \{ f_t^2(\bX_t) [h_t(\bX_t) + h_{t-1}(\bX_t)] \} < \infty$ for 
$1 \leq t \leq U$. Then $\sigma^2 < \infty$ and 
\begin{equation} \label{CLT}
\sqrt{K} (\wht \psi_U - \psi_U) \Rightarrow N(0,\sigma^2) \mbox{ as } K \rightarrow \infty.
\end{equation}
\end{thm}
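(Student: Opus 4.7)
The plan is to exploit the martingale-difference expansion (\ref{MD2}) and reduce it to a sum of $M$ asymptotically independent pieces, one per particle filter, to each of which a standard martingale CLT can be applied. Writing
\[
\sqrt{K}(\wht\psi_U - \psi_U) = \sum_{m=1}^M S_m, \qquad S_m := K^{-1/2}\sum_{u=2(m-1)T+1}^{2mT}\sum_{k=1}^K Z_u^k,
\]
the array $\{(Z_u^k)_k : 2(m-1)T<u\leq 2mT\}$ is already a martingale difference sequence with respect to $\{\cF_u\}$, but each $Z_u^k$ still depends on the outputs of PF$1$ through PF$m$, so the blocks are not yet decoupled.

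The next step is to realise the heuristic reduction (\ref{ave})--(\ref{Zapprox}) in full generality. I would define $Z_{m,u}^k$ by substituting $f_{m,t}$ and $h_{m,t}$ for $f_t$ and $h_t$ in the formulas of (\ref{Z2}), and then show that
\[
K^{-(m-1)}\sum_{k(1),\ldots,k(m-1)} f_t(\bX_t^{\bk}) \prod_{n<m} H_{n,nT}^{k(n)} \longrightarrow f_{m,t}(\bX_{m,t}^{k(m)})
\]
(and the analogous statement with $\wbX$ replacing $\bX$) in $L^2$ conditional on PF$m$. This is established by induction on $m$ using the law-of-large-numbers estimate referenced as Lemma~\ref{lem2} in Appendix A.1; the moment hypothesis $E_q\{f_t^2(\bX_t)[h_t(\bX_t)+h_{t-1}(\bX_t)]\}<\infty$ is exactly what bounds the second moments appearing in each induction step and ensures that $S_m - \widetilde S_m \to 0$ in probability, where $\widetilde S_m := K^{-1/2}\sum_u\sum_k Z_{m,u}^k$. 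This decoupling step is the main technical obstacle, because the weights $H_{n,nT}^{k(n)}$ are not uniformly bounded, which forces all control to be in $L^2$ rather than pathwise and requires careful propagation of second-moment bounds across the $m-1$ preceding filters.

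Once the reduction is in place, $\widetilde S_m$ is a functional of PF$m$ alone, and since PF$1,\ldots,$PF$M$ are generated independently the vector $(\widetilde S_1,\ldots,\widetilde S_M)$ has independent coordinates. Within each $m$ I would apply the triangular-array martingale CLT (e.g.\ Hall--Heyde): the predictable-variance convergence $\sum_u\sum_k E_K[(Z_{m,u}^k)^2 \mid \cF_{u-1}] \to \sum_{u=2(m-1)T+1}^{2mT}\sigma_u^2$ follows from another application of Lemma~\ref{lem2} combined with the standard importance-sampling and resampling identities that produce precisely the two formulas in (\ref{v2}), while the conditional Lindeberg condition follows from uniform integrability of $\{(Z_{m,u}^k)^2\}_K$, supplied again by the $L^2$ moment hypothesis via a truncation argument. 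Summing the $M$ independent Gaussian limits yields (\ref{CLT}) with $\sigma^2=\sum_{u=1}^{2U}\sigma_u^2$, and $\sigma^2<\infty$ is an immediate consequence of the same hypothesis applied to (\ref{v2}) via Cauchy--Schwarz.
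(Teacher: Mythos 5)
Your proposal follows essentially the same route as the paper's proof: the same block decomposition of (\ref{MD2}) into per-filter sums $S_m$, the same decoupling of $Z_u^k$ into the PF$m$-measurable surrogates $Z_{m,u}^k$ via the $o_p(K^{1/2})$ estimates of Lemma \ref{lem2}, and the same use of independence of the $M$ filters to add the resulting Gaussian limits. The only cosmetic difference is that the paper invokes the existing single-filter CLT of \cite{CL13} for each block $K^{-1/2}S_m$, whereas you propose to verify the Hall--Heyde conditions directly; both are valid.
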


\subsection{Asymptotic theory in the non-canonical case}\label{sec:nccan_at}

In the non-canonical case, the estimator $\wtd \psi_U$, see (\ref{tpsi}), 
can be approximated by $\wht \psi_U^c$, an unbiased estimator under the canonical case of the centered function
$$\psi^c(\bx_U) := \psi(\bx_U) - \psi_U.
$$
Therefore, analogous to (\ref{fu}) and (\ref{f2u}), we define $f_{m,(m-1)T}^c(\bx_t) = 0$ and
\begin{equation} \label{f0mt}
f_{m,t}^c(\bx_t) = E_q[\psi^c(\bX_U) L(\bX_U)|\bX_{m,t} = \bx_{m,t}], \quad (m-1)T < t \leq mT. 
\end{equation}
The corollary below then follows from Theorem \ref{thm2}. 

\begin{cor} \label{cor1} Let $\sigma^2_c = \sum_{u=1}^{2U} \sigma_{c,u}^2$, where for $(m-1)T < t \leq mT$, 
\begin{eqnarray} \label{cor1.2}
\sigma_{c,2t-1}^2 & = & E_q ( \{ [f_{m,t}^c(\bX_{m,t})]^2-[f_{m,t-1}^c(\bX_{m,t-1})]^2 \}  
h_{m,t-1}(\bX_{m,t-1})), \\ \nonumber  
\sigma_{c,2t}^2 & = & E_q \{ [f_{m,t}^c(\bX_{m,t})]^2 h_{m,t}(\bX_{m,t}) \}.
\end{eqnarray}
Under the assumptions of Theorem {\rm \ref{thm2}},  
\begin{equation} \label{CLT2}
\sqrt{K} (\wtd \psi_U - \psi_U) \Rightarrow N(0,\sigma^2_c) \mbox{ as } K \rightarrow \infty.
\end{equation}
 \end{cor}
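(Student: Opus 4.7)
The plan is to reduce the non-canonical statement to the canonical one of Theorem \ref{thm2} via a ratio-style decomposition together with Slutsky's theorem. Starting from the definition (\ref{tpsi}) and writing $\psi^c = \psi - \psi_U$, subtract $\psi_U$ in both the numerator and denominator to obtain
\begin{equation*}
\wtd \psi_U - \psi_U \;=\; \frac{K^{-M}\sum_{\bk\in\mZ^M} L(\bX_U^{\bk})\,\psi^c(\bX_U^{\bk})\, H_U^{\bk}}{K^{-M}\sum_{\bk\in\mZ^M} L(\bX_U^{\bk})\, H_U^{\bk}} \;=\; \frac{\wht\psi_U^{\,c}}{\wht\lambda(\theta)/\lambda(\theta)},
\end{equation*}
where $\wht\psi_U^{\,c}$ is precisely the canonical-case estimator of the function $\psi^c$ defined in (\ref{hatpsi}).

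First I would verify that the denominator tends to $1$ in probability. Specializing Theorem \ref{thm2} to $\psi\equiv 1$ (so that $\psi_U=1$ and $\wht\psi_U = \wht\lambda(\theta)/\lambda(\theta)$ by (\ref{lhat})) yields $\sqrt{K}(\wht\lambda(\theta)/\lambda(\theta)-1) \Rightarrow N(0,\cdot)$, hence $\wht\lambda(\theta)/\lambda(\theta)\srp 1$. The mild integrability hypothesis required is inherited from the assumption of Theorem \ref{thm2} applied to the constant function.

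Next I would apply Theorem \ref{thm2} directly to $\psi^c$. The associated backward functions are exactly those defined in (\ref{f0mt}), and the corresponding ``$f_0$'' is $E_p[\psi^c(\bX_U)\mid \bY_U]=0$. Substituting $f_0=0$ and $f_{m,t}\leadsto f_{m,t}^c$ into formula (\ref{v2}) collapses the second summand: $E_q\{[f_{m,t}^c h_{m,t} - 0]^2/h_{m,t}\} = E_q\{(f_{m,t}^c)^2 h_{m,t}\}$, matching the second line of (\ref{cor1.2}); the first summand is unchanged and matches the first line of (\ref{cor1.2}). Thus $\sqrt K\,\wht\psi_U^{\,c}\Rightarrow N(0,\sigma_c^2)$. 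The required integrability condition $E_q\{(f_t^c)^2[h_t+h_{t-1}]\}<\infty$ follows from the assumption of Theorem \ref{thm2} once one notes $f_t^c = f_t - \psi_U \,E_q[L(\bX_U)\mid \bX_t]$ and that $\psi_U$ is a finite constant, so $(f_t^c)^2 \le 2 f_t^2 + 2\psi_U^2 (E_q[L\mid\bX_t])^2$, and each term has finite $h$-weighted expectation by Cauchy--Schwarz and Jensen together with $E_q[L\cdot h_t] = \eta_t \cdot \lambda(\theta)^{-1}\cdot(\text{bounded})$.

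Finally I would invoke Slutsky's theorem: $\sqrt K(\wtd\psi_U-\psi_U)$ equals the ratio $\sqrt K\,\wht\psi_U^{\,c}/(\wht\lambda(\theta)/\lambda(\theta))$, whose numerator converges weakly to $N(0,\sigma_c^2)$ and denominator in probability to $1$, yielding (\ref{CLT2}). There is no serious technical obstacle here beyond bookkeeping; the main thing to be careful about is the matching of the variance expressions when $f_0$ is set to zero, which is the only place where the canonical-case formula (\ref{v2}) genuinely simplifies in passing to (\ref{cor1.2}).
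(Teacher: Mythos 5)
Your proposal is correct and follows essentially the same route as the paper: the same centering decomposition $\wtd \psi_U - \psi_U = \wht \psi_U^{\,c} \big/ \big(K^{-M}\sum_{\bk\in\mZ^M} L(\bX_U^{\bk}) H_U^{\bk}\big)$, Theorem \ref{thm2} applied to $\psi^c$ (with $f_0=0$ collapsing \eqref{v2} into \eqref{cor1.2}), and Slutsky's theorem. The one small difference is that the paper drives the denominator to $1$ via the weak law of large numbers of Lemma \ref{lem1}(b),(c), which needs only the automatic first-moment fact $E_q[L(\bX_U)]=1$, whereas your route through Theorem \ref{thm2} with $\psi\equiv 1$ would implicitly impose a second-moment condition on $E_q[L(\bX_U)\mid\bX_t]$ that is not guaranteed by the stated hypotheses.
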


In Section 3.3.3, we show how $\sigma_c^2$ can be estimated in-sample,
and discuss the implications in particle size allocation. Before that, we shall illustrate, in Sections 3.3.1 and 3.3.2,
the advantage of segmentation in providing stability to smoothed latent-state estimations, that is
estimation of $E_p(X_u|\bY_U)$ for $u < U$, with $u$ fixed as $U \rightarrow \infty$.

\subsubsection{Example} 

Consider the linear time-series
\begin{equation} \label{lts}
X_t = aX_{t-1}+\epsilon_t, \quad Y_t = X_t + \eta_t, 
\end{equation}
with $0 < a < 1$, $\epsilon_t \sim N(0, (1-a^2)\sigma_X^2)$ and $\eta_t \sim N(0,\sigma_Y^2)$. 
Let $\theta = (a,\sigma_X^2, \sigma_Y^2)$. 

We shall illustrate on this simple example the advantage of parallel particle filters in smoothed estimation of $X_t$. 
Consider firstly the segmented particle filter with
$T=1$ and $q_t(x_t|\bx_{t-1}) = p_\theta(x_t|\bY_U)$. Let $w_t \equiv 1$, 
therefore $h_{t,t} \equiv 1$ (recall also that $h_{t,t-1} \equiv 1$) for all $t$.
Consider $\psi(\bx_U)=x_u$ for some $1 \leq u < U$. By (\ref{Lx}) and (\ref{f0mt}),  
$$f_{t,t}^c(\bx_t) = E_p(X_u|X_t=x_t, \bY_U)-E_p(X_u|\bY_U).
$$
Since $f_{t,t-1}^c \equiv 0$, therefore by (\ref{cor1.2}), 
\begin{equation} \label{ex1}
\sigma_{c,2t}^2 = \sigma_{c,2t-1}^2 = {\rm Var}_p(E_p(X_u|X_t,\bY_U)|\bY_U) 
\leq {\rm Var}_p(E_p(X_u|X_t)) = a^{|u-t|} \sigma_X^2,
\end{equation}
with the inequality in (\ref{ex1}) following from say eq.(2.5.3) of \cite{And84}.
Therefore 
$$\sigma_c^2 = \sum_{t=1}^U (\sigma_{c,2t-1}^2+\sigma^2_{c,2t}) \leq \Big( \frac{4}{1-a}-2 \Big) \sigma_X^2.
$$
Corollary \ref{cor1} says that, in this example,  
$\sigma_c^2$ is uniformly bounded and hence
the estimation of $X_u$ is stable as $U \rightarrow \infty$.

For the standard particle filter with no latent-state sequence segmentation, that is in the case
$M=1$ and $T=U$, we consider $q_t(x_t|\bx_{t-1}) = p_\theta(x_t|\bx_{t-1},\bY_U)$ and $w_t \equiv 1$.
Then by (\ref{f0mt}),
\begin{eqnarray*}
f_{2t}^c(\bx_t) [=f_{1,2t}^c(\bx_t)] & = & E_p(X_u|\bX_t=\bx_t,\bY_U)-E_p(X_u|\bY_U), \cr
& = & X_u - E_p(X_u|\bY_U) \mbox{ if } t \geq u. 
\end{eqnarray*}
Since ${\rm Var}_p(X_u|\bY_U) \geq {\rm Var}_p(X_u|X_{u-1}, X_{u+1}, Y_u) > 0$, 
therefore in this example, $\sigma^2_{c,2t}$ is bounded away from 0 for $t \geq u$,
and consequently $\sigma_c^2 \rightarrow \infty$ as $U \rightarrow \infty$.

Intuitively, in the case of the standard particle filter, 
the estimation of $E_p(X_u|\bY_U)$ is unstable as $U \rightarrow \infty$ simply due to degeneracy caused by repeated resamplings
at $t \geq u$. On the other hand, the repeated resamplings does not cause instability in the segmented method because resampling in one segment
does not result in sample depletion of another segment. 
There is a vast literature on smoothed latent-state estimators, see for example \cite{cappe}. 
We do not go into details here as our main motivation for looking at parallel particle filters is to achieve wall-clock computation time savings;  
the variance reductions in smoothed estimates can be viewed as an added benefit.

\subsubsection{Numerical Study} \label{sec:numerics}

As in Section 3.3.1, consider the linear time-series (\ref{lts}) and the estimation of $\psi(\bx_U) = x_u$ for $0 < u \leq U$,
conditioned on $\bY_U$. Kalman updating formulas are applied to
compute $E(X_u|\bY_{u-1})$ and $E(X_u|\bY_u)$ analytically, 
and the Rauch-Tung-Striebel smoother is applied to compute $E(X_u|\bY_t)$ for $t > u$.
The mean-squared errors (MSE) of the particle filter methods can then be computed using Monte Carlo. 

The first method we consider is the standard particle filter that performs
bootstrap resampling at every step. We select $q_1$ as  $N(0, \sigma_X^2)$ and $q_t(\cdot|\bx_{t-1})$ as
$N(ax_{t-1}, (1-a^2) \sigma_X^2)$ for $t>1$, hence
\begin{equation} \label{nwt}
w_t(\bx_t) = \exp \Big[-\frac{(Y_t-x_t)^2}{2 \sigma_Y^2} \Big] \quad t \geq 1.
\end{equation}

We next consider parallel particle filters with $\bX_U$ segmented into $M$ portions of equal length $T=U/M$.
The resampling weights are as in (\ref{nwt}), and like the standard particle filter,
$$q_{m,t}(\cdot|\bx_{t-1}) = N(ax_{t-1},(1-a^2) \sigma_X^2) \mbox{ for } (m-1)T+1 < t \leq mT. 
$$

\begin{table}[t] 
\begin{center}
\begin{tabular}{rr|lll}
& & \multicolumn{3}{|c}{MSE ($\times 10^{-2}$)} \cr
$u$ & $E_p(X_u|\bY_{50})$ & Standard & Seg.: $N(0,1)$ & Seg.: $N(\wht \mu_m,\wht \sigma_m^2)$ \cr \hline
5 & $-$0.46 & 5.8$\pm$1.0 & 0.67$\pm$0.09 & 0.6$\pm$0.1 \cr
10 & 0.45 & 6.0$\pm$0.8 & 0.35$\pm$0.04 & 0.34$\pm$0.05 \cr
15 & 0.80 & 5.1$\pm$0.6 & 0.64$\pm$0.08 & 0.58$\pm$0.07 \cr
20 & 0.95 & 4.9$\pm$0.7 & 1.1$\pm$0.2 & 0.77$\pm$0.09 \cr
25 & 1.16 & 6.1$\pm$1.0 & 2.8$\pm$0.4 & 3.0$\pm$0.4 \cr
30 & 2.81 & 2.7$\pm$0.4 & 2.2$\pm$0.3 & 1.0$\pm$0.1 \cr
35 & 0.81 & 1.8$\pm$0.2 & 1.9$\pm$0.2 & 0.8$\pm$0.1 \cr
40 & $-$1.10 & 1.4$\pm$0.2 & 1.5$\pm$0.2 & 0.9$\pm$0.2 \cr
45 & $-$0.50 & 0.7$\pm$0.1 & 0.69$\pm$0.09 & 0.55$\pm$0.07 \cr
50 & 0.38 & 0.19$\pm$0.02 & 0.18$\pm$0.02 & 0.20$\pm$0.03 \cr
\end{tabular}
\end{center}
\caption{ MSE $(\times 10^{-2})$ of $\wht E(X_u|\bY_{50})$ for: 1. the standard particle filter, 
2. the segmented particle filter initialized at $N(0,1)$, 
3. the segmented particle filter initialized at $N(\wht \mu_m,\wht \sigma_m^2)$,
with $(\wht \mu_m, \wht \sigma_m^2)$ estimated from past observations.}
\end{table}

We consider two versions of $q_{m,(m-1)T+1}$ ($=r_m$ here), the initial sampling distribution of 
$\wtd X_{m,(m-1)T+1}^k$. In the first version, we simply let $q_{m,(m-1)T+1} = N(0,\sigma_X^2)$.
In the second version, we let
$q_{m,(m-1)T+1} = N(\wht \mu_m,\wht \sigma_m^2)$, with $(\wht \mu_1,\wht \sigma_1^2)=(0,\sigma_X^2)$ and
for $m \geq 2$, $(\wht \mu_m,\wht \sigma_m^2)$ simulated using $(Y_{(m-1)T-r}, \ldots, Y_{(m-1)T})$ for 
some $r \geq 0$.
More specifically, we let 
\begin{eqnarray*}
\mu_m & = & E(X_{(m-1)T+1}|Y_{(m-1)T-r}, \ldots, Y_{(m-1)T}), \cr 
\sigma_m^2 & = & {\rm Var}(X_{(m-1)T+1}|Y_{(m-1)T-r}, \ldots, Y_{(m-1)T}), 
\end{eqnarray*}
and estimate them by sampling $(X_{(m-1)T-r}, \ldots, X_{(m-1)T+1})$ using particle filters. 

In our simulation study, we select $a=0.8$, $\sigma_X^2 = \sigma_Y^2=1$, $U=50$ and $M=5$.
We apply $K=500$ particles in each filter, and for the second version of the segmented method,
we consider $r=4$. Each method is repeated 100 times, and its MSE are reported in Table 1.
We see from Table 1 substantial variance reductions for parallel particle filters over standard particle filters,
especially when $U-u$ is big, agreeing with the discussions in Section 3.3.1. 
In addition, we see that applying estimation of $(\mu_m, \sigma_m^2)$ improves upon the 
performances of parallel particle filters. 

\subsubsection{In-sample variance estimation and particle size allocation}

Let $\sigma_{{\rm P}m}^2 = 
\sum_{u=2(m-1)T+1}^{2mT} \sigma_{c,u}^2$ be the variability attributed to the $m$th particle filter.
If $K_m$ particles are allocated to particle filter $m$ with $K_m$ large, 
then analogous to (\ref{CLT2}), 
\begin{equation} \label{Vtpsi}
{\rm Var}(\wtd \psi_U) \doteq \sum_{m=1}^M \frac{\sigma_{{\rm P} m}^2}{K_m}.
\end{equation}
Therefore being able to estimate $\sigma_{\rm{P}m}^2$ in-sample would allow us to optimally allocate the particle sizes 
in the $M$ particle filters so as to
minimize (\ref{Vtpsi}). The estimation can be done in the following manner.

Consider $1 \leq m \leq M$ and let $C_m^j = \{ k:A_{m,mT}^k=j \}$, noting that
$A_{m,mT}^k=j$ if and only if $\bX_{m,mT}^k$ is descended from $\wtd X_{(m-1)T+1}^j$. Let 
$$Q_j(\mu) = \sum_{\bk \in \mZ^M: k(m) \in C_m^j} L(\bX_U^\bk)[\psi(\bX_U^\bk)-\mu] H_U^\bk.
$$

\begin{thm} \label{thm3} Under the assumptions of Corollary \ref{cor1},
\begin{equation} \label{Qj}
\wht \sigma^2_{{\rm P}m}(\psi_U) := K^{-1} \sum_{j=1}^K Q_j^2(\psi_U) \srp \sigma_{{\rm P}m}^2,
\quad 1 \leq m \leq M.
\end{equation}
\end{thm}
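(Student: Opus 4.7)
The plan is to identify $Q_j(\psi_U)$ with a cumulative martingale increment indexed by the $j$th first-generation ancestor of filter $m$, and then to average squared martingale sums across $j$ to recover $\sigma^2_{{\rm P}m}$. The key input is a within-filter analogue of Theorem \ref{thm1}'s martingale decomposition: working with the centered $\psi^c := \psi - \psi_U$ and using the marginalized $f^c_{m,t}$ of (\ref{f0mt}) in place of $f^c_t$, define
\begin{align*}
\tilde\epsilon^{c,j}_{2t-1} &= \sum_{k:A^k_{m,t-1}=j}[f^c_{m,t}(\wbX^k_{m,t})-f^c_{m,t-1}(\bX^k_{m,t-1})]H^k_{m,t-1},\\
\tilde\epsilon^{c,j}_{2t} &= \sum_{k:A^k_{m,t-1}=j}(\#^k_t-KW^k_t)f^c_{m,t}(\bX^k_{m,t})\wtd H^k_{m,t},
\end{align*}
for $(m-1)T<t\leq mT$. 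By the tower-law argument of Theorem \ref{thm1} restricted to the $\sigma$-fields of filter $m$ alone, these form a martingale difference sequence; the telescoping identity (\ref{esum}) applied locally, together with the convention $f^c_{m,(m-1)T}\equiv 0$, gives $\sum_u \tilde\epsilon^{c,j}_u = \sum_{k\in C^j_m}f^c_{m,mT}(\bX^k_{m,mT})H^k_{m,mT}$.

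Next, decompose $Q_j(\psi_U)=\sum_{k(m)\in C^j_m}H^{k(m)}_{m,mT}\bigl(\sum_{\bk^{-m}}L(\bX^\bk_U)\psi^c(\bX^\bk_U)\prod_{n\neq m}H^{k(n)}_{n,nT}\bigr)$, where $\bk^{-m}$ denotes the multi-index over filters $n\neq m$. Because $L(\bx_U)\psi^c(\bx_U)=f^c_U(\bx_U)$ and $f^c_{m,mT}(\bx_{m,mT})=E_q[f^c_U(\bX_U)\mid\bX_{m,mT}=\bx_{m,mT}]$, iterated application of the law-of-large-numbers collapse in Lemma~\ref{lem2} (sweeping over the $M-1$ other filters one at a time) replaces the inner sum by the appropriate scaling factor times $f^c_{m,mT}(\bX^{k(m)}_{m,mT})$, up to $o_P$-error. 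Absorbing the scaling (consistent with the $K^{-m+1}$ factor appearing in (\ref{e2}) and (\ref{esum})) yields $Q_j(\psi_U)\doteq\sum_u \tilde\epsilon^{c,j}_u$. The orthogonality of the within-filter martingale differences, together with the conditional multinomial second moment $E[(\#^k_t-KW^k_t)^2|\cF_{2t-1}]=KW^k_t(1-W^k_t)$ and the standard SMC LLN for the filter-$m$ empirical measure, matches $E[(\tilde\epsilon^{c,j}_{2t-1})^2|\cdot]$ and $E[(\tilde\epsilon^{c,j}_{2t})^2|\cdot]$ to the summands $\sigma^2_{c,2t-1}$ and $\sigma^2_{c,2t}$ of (\ref{cor1.2}) term by term, aggregating to $\sigma^2_{{\rm P}m}$.

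Finally, for the LLN across $j$, the initial particles $\{\wtd X^j_{(m-1)T+1}\}_{j=1}^K$ are drawn i.i.d.\ from $q_{m,(m-1)T+1}$, so the $K$ random variables $\sum_u\tilde\epsilon^{c,j}_u$ are asymptotically uncorrelated; a standard LLN conditional on the initial particles and the resampling weights then gives $K^{-1}\sum_j Q^2_j(\psi_U)\srp\sigma^2_{{\rm P}m}$. The main obstacle will be the LLN collapse in the second step: since $L\psi^c$ is not multiplicatively separable across segments, the iterated use of Lemma~\ref{lem2} requires explicit and uniform error control, particularly on events where some ancestor set $C^j_m$ is unusually large. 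The coalescent couplings produced by resampling within filter $m$ must likewise be shown not to affect the leading-order averaging across $j$, for which the genealogical analysis of \cite{CL13}, applied locally to filter $m$, should be directly adaptable.
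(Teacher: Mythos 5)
Your overall architecture matches the paper's: both arguments collapse $Q_j(\psi_U)$ onto the within-filter ancestral sum $e^j=\sum_{\ell\in C^j_m}f^c_{m,mT}(\bX^\ell_{m,mT})H^\ell_{m,mT}$ built from the marginalized functions, and then reduce the problem to a single-filter statement. Two points of divergence are worth recording. First, for the single-filter part $K^{-1}\sum_{j=1}^K (e^j)^2\srp\sigma^2_{{\rm P}m}$ the paper does not re-derive anything: it invokes \cite[Corollary 2]{CL13} directly (see (\ref{ej})), whereas you propose to rebuild it from conditional second-moment matching plus a law of large numbers across $j$. Your claim that the $K$ ancestral sums are ``asymptotically uncorrelated'' because the initial particles are i.i.d.\ is optimistic: the $e^j$ are coupled through the shared normalized weights $W_t^k$ and the negatively correlated multinomial offspring counts at every resampling stage, so carrying out your plan would in effect reproduce the genealogical analysis of \cite{CL13}; citing it is the cleaner route and is what the paper does.

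Second, and more consequentially, the step you yourself flag as ``the main obstacle'' is exactly where the substance of the proof lies, and a per-$j$ statement $Q_j\doteq e^j$ is the wrong granularity. Since the target is $K^{-1}\sum_{j=1}^K Q_j^2$, what is needed is the aggregated bound $K^{-1}\sum_{j=1}^K (Q_j-e^j)^2\srp 0$, which together with $K^{-1}\sum_{j=1}^K(e^j)^2=O_p(1)$ and Cauchy--Schwarz also disposes of the cross terms. The paper obtains precisely this by writing $Q_j-e^j=\sum_{v}\sum_{\ell:A^\ell_{m,t}=j}\zeta^\ell_v$ with $\zeta^\ell_{2u-1}=Z^\ell_{2u-1}-Z^\ell_{2,2u-1}$ and $\zeta^\ell_{2u}=(\#_u^\ell-KW_u^\ell)\chi^\ell$, and running a stage-wise induction (\ref{zeta}): at each stage the increments are conditionally centered and orthogonal across particles given the current filtration, the multinomial covariance structure contributes the term $\sum_\ell W_u^\ell(\chi^\ell)^2$ at even stages, and Lemma \ref{lem2}, through the bound (\ref{EKD}), shows these contributions vanish. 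If you supply that induction your outline becomes a complete proof; as written it is a correct skeleton in which the decisive estimate is asserted rather than established.
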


Since $\wtd \psi_U \srp \psi_U$ by Corollary \ref{cor1}, therefore $\wht \sigma^2_{{\rm P}m}(\wtd \psi_U)$
is consistent for $\sigma^2_{{\rm P}m}$. Besides particle size allocation, being able to estimate $\sigma^2_{{\rm P}m}$, 
$1 \leq m \leq M$, and hence $\sigma_c^2$ allows
us to assess the level of accuracy of $\wtd \psi_U$ in estimating $\psi_U$.

\section{Discussion}\label{sec:summary}

We discuss here the subsampling approach, proposed in \cite{briers,fearnhead}, that can be used to reduce the
$\mathcal{O}(K^2)$ computational cost of our estimates.
We make the discussions more concrete here by considering $M=2$ and focusing on the estimation of the
likelihood $\lambda(\theta)$.

The actual computational cost of the double sum in (\ref{eq:lam_clar}) may be less expensive than it seems,
given that this operation is done only once, and that time-savings can be achieved if we bother to first group the segments
$\bX_{2,U}^\ell$ having a common first component. However asymptotically, we do have a
larger computational complexity due to the double sum. 

Let $\{ (k(v),\ell(v)): 1 \leq v \leq V \}$ be selected i.i.d. from $\beta$,  
a positive distribution on $\mZ_K^2$, and estimate $\lambda(\theta)$ by  
$$\wht \lambda^*(\theta) = \Big( \prod_{t=1}^U \bar w_t \Big) (K^2 V)^{-1}
\sum_{v=1}^V \frac{p_\theta(X_{T+1}^{\ell(v)}|X_T^{k(v)})}{r_2(X_{T+1}^{\ell(v)}) \beta(k(v), \ell(v))}.
$$
Since $\wht \lambda(\theta)$ is unbiased for $\lambda(\theta)$, therefore so is $\wht \lambda^*(\theta)$. 
For example, we can apply stratification sampling so that ``good" pairs are
chosen more frequently. The choice of $V \sim K^s$ for $s=1$ would give us a $\mathcal{O}(K)$ algorithm, 
though we may have to select $s>1$ in order to maintain the asymptotic variance of $\wht \lambda(\theta)$. 
As the computation of $\wht \lambda^*(\theta)$ is separate from the execution of the parallel particle filters
and can be done off-line, 
improving $\wht \lambda^*(\theta)$ with more sampling does not require re-running of the particle filters.

\subsubsection*{Acknowledgements}

The third author was supported by an MOE Singapore grant R-155-000-119-133 and is affiliated with the Risk Management Institute and Centre for Quantitative Finance at NUS.

\appendix

\section{Proofs}

We preface the proofs of the main results of Section 3 with 
two supporting lemmas in Appendix A.1 below. Lemma \ref{lem1} is a weak law of large number for sums of 
segmented sequences. Lemma \ref{lem2} provides finer approximations of such sums. 

 \subsection{Asymptotics and finer approximations for sums of segmented sequences}

\begin{lem} \label{lem1}
Let $G$ be a real-valued measurable function of $\bx_t$ for some $(m-1)T < t \leq mT$ with $1 \leq m \leq M$. 

\smallskip {\rm (a)} If $\wtd \mu_t := E_q[G(\bX_t)/h_{t-1}(\bX_{t-1})]$ exists and is finite, 
then as $K \rightarrow \infty$,
$$K^{-m} \sum_{\bk \in \mZ^m} G(\wbX_t^{\bk}) \srp \wtd \mu_t.
$$

{\rm (b)} If $\mu_t := E_q[G(\bX_t)/h_t(\bX_t)]$ exists and is finite, then as $K \rightarrow \infty$,
$$K^{-m} \sum_{\bk \in \mZ^m} G(\bX_t^{\bk}) \srp \mu_t. 
$$

{\rm (c)} For each $\bk \in \mZ^m$, 
\begin{equation} \label{HH}
\frac{\wtd H_t^{\bk}}{h_t(\wbX_t^{\bk})} = \frac{H_t^{\bk}}{h_t(\bX_t^{\bk})} =
\frac{\bar w_1 \cdots \bar w_t}{\eta_t} \srp 1.
\end{equation} 
\end{lem}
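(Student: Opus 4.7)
The plan is to prove (c), (b), and (a) together by induction on the segment index $m$, using as a black box the standard single-filter weak law of large numbers (e.g.\ \cite{Del04,Cho04}) applied within each particle filter, and exploiting the independence between filters.

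The starting point is a multiplicative segmentation. Because $q_t(\cdot\mid\bx_{t-1})$ and $w_t(\bx_t)$ for $t$ in segment $m$ do not depend on $\bx_{(m-1)T}$, one immediately gets $\eta_t = \eta_{(m-1)T}\,\eta_{m,t}$, $h_t(\bx_t) = h_{(m-1)T}(\bx_{(m-1)T})\,h_{m,t}(\bx_{m,t})$, and $\bar w_1\cdots\bar w_t = \bigl[\prod_{n<m}\bar w_{(n-1)T+1}\cdots\bar w_{nT}\bigr]\cdot\bar w_{(m-1)T+1}\cdots\bar w_t$. Part (c) is then almost immediate: the two identities in (\ref{HH}) are algebraic consequences of (\ref{Ht}) and (\ref{Htk}), and the convergence $\bar w_1\cdots\bar w_t/\eta_t\srp 1$ follows by recognizing each segment factor $\bar w_{(n-1)T+1}\cdots\bar w_{nT}/\eta_{n,nT}$ as the classical SMC estimator of the segment normalizing constant divided by its true value, hence $\srp 1$ by the single-filter result, and invoking independence of the $M$ filters to pass to the product.

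For (b), proceed by induction on $m$. The base case $m=1$ is the single-filter LLN: with $F(\bx_t)=G(\bx_t)$ one has $K^{-1}\sum_k G(\bX_t^k)\srp E_q[G(\bX_t)\prod_{u\le t}w_u(\bX_u)]/\eta_t = E_q[G(\bX_t)/h_t(\bX_t)]$. For general $m$, define the partial integral $\bar G(\bx_{(m-1)T}) := E_q[G(\bx_{(m-1)T},\bX_{m,t})/h_{m,t}(\bX_{m,t})]$ (an expectation only over segment $m$). Conditionally on the output $\cG^{<m}$ of the first $m-1$ filters, the single-filter LLN applied to filter $m$ gives, for each fixed $\bk'\in\mZ^{m-1}$,
\[
K^{-1}\sum_{k(m)=1}^K G(\bX^{\bk'},\bX_{m,t}^{k(m)})\;\srp\;\bar G(\bX^{\bk'}).
\]
Averaging this over $\bk'$ and applying the induction hypothesis to $\bar G$,
\[
K^{-m}\sum_{\bk\in\mZ^m} G(\bX_t^{\bk})\;\doteq\;K^{-(m-1)}\sum_{\bk'\in\mZ^{m-1}}\bar G(\bX^{\bk'})\;\srp\;E_q[\bar G(\bX_{(m-1)T})/h_{(m-1)T}(\bX_{(m-1)T})],
\]
and the factorization $h_t=h_{(m-1)T}\,h_{m,t}$ identifies the right-hand side with $\mu_t = E_q[G(\bX_t)/h_t(\bX_t)]$. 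Part (a) is proved identically, except that one uses the ``pre-resampling'' single-filter LLN at step $t$: conditional on $\cF_{2t-2}$, the particles $\wbX_{m,t}^k=(\bX_{m,t-1}^k,\wX_t^k)$ carry only the weight history up to step $t-1$, so the limit features $\prod_{u\le t-1}w_u$, which translates to a division by $h_{t-1}$ rather than $h_t$.

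The main obstacle is the rigorous justification of the ``$\doteq$'' in the displayed equation above: one needs the single-filter convergence to be usable after averaging over the first $m-1$ filters' indices $\bk'$. This is where an $L^2$ version of the single-filter LLN is needed, so that $E\bigl[(K^{-1}\sum_{k(m)} G(\bX^{\bk'},\bX_{m,t}^{k(m)})-\bar G(\bX^{\bk'}))^2\mid\cG^{<m}\bigr]\to 0$ with a bound uniform enough to conclude that the $K^{-(m-1)}\sum_{\bk'}$-averaged error vanishes in probability; the bound follows from the standard $L^2$ propagation-of-chaos estimate for a single particle filter under the hypothesis that the relevant $h$-weighted second moments of $G$ (through $\bar G$) are finite, which in Lemmas \ref{lem1} and the subsequent Lemma \ref{lem2} is supplied by the stated integrability assumptions.
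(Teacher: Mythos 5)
Your overall architecture---induction exploiting the conditional independence of filter $m$ given the output of the earlier filters, with part (c) reduced to the algebraic identities (\ref{Ht})--(\ref{Htk}) plus per-segment consistency of the normalizing-constant estimators---is in the spirit of the paper's proof, and you have correctly isolated the genuinely hard step: passing from the single-filter limit for each fixed $\bk'$ to the average over the $K^{m-1}$ indices $\bk'$. The gap lies in how you close that step. You invoke an $L^2$ propagation-of-chaos bound and assert that the required second moments are ``supplied by the stated integrability assumptions''; they are not. Lemma \ref{lem1}(a),(b) assume only that $E_q[G(\bX_t)/h_{t-1}(\bX_{t-1})]$, respectively $E_q[G(\bX_t)/h_t(\bX_t)]$, is finite---a first-moment condition. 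Second moments enter only in Lemma \ref{lem2}, which is a separate and stronger statement. Under a first-moment hypothesis the conditional variance of $K^{-1}\sum_{k(m)}G(\bX^{\bk'},\bX_{m,t}^{k(m)})$ need not be finite, so the $L^2$ estimate you rely on is unavailable, and with it the uniform control needed to average the errors over $\bk'$. Strengthening the lemma to a second-moment hypothesis is not an acceptable fix either: part (c) is obtained by applying (a) to $G=w_t$, where finiteness of $E_q[w_t(\bX_t)/h_{t-1}(\bX_{t-1})]=\eta_t/\eta_{t-1}$ is exactly the standing assumption (\ref{eta}), whereas a second-moment version would demand integrability of $w_t^2$ that the paper never assumes.

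The paper closes the gap differently. It inducts over the time index $t$ (one importance-sampling or resampling step at a time, conditioning on $\cF_{2u-2}$) rather than over the segment index $m$, truncates $G$ at level $\delta K$ so that $(R_{u,\delta K}^{\ell})^2\le \delta K\,R_{u,\delta K}^{\ell}$ converts second moments into first moments at the cost of a factor $\delta$, shows the discarded tail is negligible by applying the induction hypothesis to $G_c(\bx_{u-1})=E_q[G(\bX_u)\bone_{\{G(\bX_u)>c\}}\mid\bX_{u-1}=\bx_{u-1}]$, and then controls the product over $\ell$ of the conditional characteristic functions of the truncated, conditionally independent blocks, recovering the limit $\exp(\bi\theta\wtd\mu_u)$ after letting $\delta\to0$. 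The per-time-step induction is essential here: it is what allows the truncated tail to be written as a conditional expectation to which the induction hypothesis applies. As written, your proposal proves a version of the lemma under hypotheses strictly stronger than those stated, and therefore does not establish Lemma \ref{lem1} itself; to repair it you would need to import a truncation-plus-characteristic-function (or truncation-plus-Chebyshev) argument of the above kind into your induction on $m$.
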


\begin{proof}
Since $G=G^+ - G^-$, we can assume without loss of generality that $G$ is nonnegative. The proofs of
(a) and (b) for $t \leq T$ follows from standard induction arguments, see \cite[Lemma 2]{CL13}. 
For $t>T$, induction arguments are again used, but the framework is now considerably
more complicated with summation on a multi-dimensional instead of a one-dimensional space.
Unlike in \cite{CL13}, characteristic functions are now needed in the induction proof. 

Let $T < u \leq 2T$ and assume that $\wtd \mu_u$ exists and is finite, and that
Lemma \ref{lem1}(b) holds for $t=u-1$. 
Let $V_u^\ell = K^{-1} \sum_{k=1}^K G(\wbX_u^{k \ell})$ and consider the decomposition 
$V_u^\ell = R_{u,c}^\ell + S_{u,c}^{\ell}$,
where
\begin{equation} \label{A2}
R_{u,c}^\ell = K^{-1} \sum_{k=1}^K G(\wbX_u^{k \ell}) {\bf 1}_{\{ G(\tilde \bX_u^{k \ell}) \leq c \}},
\ S_{u,c}^\ell = K^{-1} \sum_{k=1}^K G(\wbX_u^{k \ell}) {\bf 1}_{\{ G(\tilde \bX_u^{k \ell}) > c \}}.
\end{equation}
Let $\bar V_u = K^{-1} \sum_{\ell=1}^K V_u^\ell$, and define $\bar R_{u,c}$, $\bar S_{u,c}$ in a similar fashion.
Let $\bi = \sqrt{-1}$ and define
\begin{eqnarray} \label{tmu}
\wtd \mu_{u,c} & = & E_q[G(\bX_u) {\bf 1}_{\{ G(\bX_u) > c \}}/h_{u-1}(\bX_{u-1})], \\ \nonumber
\varphi^{\ell}_{2u-1,c}(\theta|\cF_{2u-2}) & = & E_K [ \exp ( \bi \theta K^{-1}  
R_{u,c}^\ell ) | \cF_{2u-2} ].
\end{eqnarray}
Let $\delta > 0$. 
Since $R_{u,\delta K}^1, \ldots, R_{u,\delta K}^K$ are independent conditioned on $\cF_{2u-2}$, 
\begin{eqnarray} \label{A3}
E_K [ \exp ( \bi \theta \bar V_u)| \cF_{2u-2} ] & = & 
\prod_{\ell=1}^K \varphi^{\ell}_{2u-1, \delta K}(\theta|\cF_{2u-2}) + r_K, \\ \nonumber
r_K & = & E_K \{ \exp ( \bi \theta \bar R_{u,\delta K} )  
[\exp ( \bi \theta \bar S_{u,\delta K} )-1 ] | \cF_{2u-2} \}.
\end{eqnarray}
Since $|e^{\bi z}-1| \leq |z|$,  for $K \geq c/\delta$,
\begin{equation} \label{rK1}
|r_K| \leq |\theta| K^{-1} \sum_{\ell=1}^K E_K(S_{u,c}^\ell | \cF_{2u-2}).
\end{equation}
By the induction hypothesis applied on
$G_c(\bx_{u-1}) := E_q[G(\bX_u) {\bf 1}_{\{ G(\bX_u) > c \}}|\bX_{u-1}=\bx_{u-1}]$,
\begin{equation} \label{rK2}
K^{-1} \sum_{\ell=1}^K E_K(S_{u,c}^\ell | \cF_{2u-2}) \stackrel{p}{\rightarrow} 
E_q[G_c(\bX_{u-1})/h_{u-1}(\bX_{u-1})]  (= \wtd \mu_{u,c}). 
\end{equation}
Since $\wtd \mu_{u,c} \rightarrow 0$ as $c \rightarrow \infty$, therefore by (\ref{rK1}) and (\ref{rK2}), 
\begin{equation} \label{A4}
r_K \srp 0.
\end{equation}

Let $\wtd R_{u,c} = R_{u,c}^\ell - E_K(R_{u,c}^\ell|\cF_{2u-2})$ and
\begin{equation} \label{tphi}
\wtd \varphi_{2u-1,c}^\ell(\theta|\cF_{2u-2}) = E_K[\exp(\bi \theta K^{-1} \wtd R_{u,c}^\ell)
|\cF_{2u-2}]. 
\end{equation}
Since $|E[e^{\bi \theta Z}-(1+\bi \theta Z-\frac{\theta^2 Z^2}{2})]| \leq \theta^2 E Z^2$ (see
(26.5) of \cite{Bil91}) and $(R_{u,\delta K}^\ell)^2 \leq \delta K R_{u,\delta K}^\ell$,
\begin{eqnarray} \label{Bill}
& & |\wtd \varphi_{2u-1,\delta K}^\ell(\theta|\cF_{2u-2})- \{ 1-[\theta^2/(2K^2)] {\rm Var}_K(R_{u,\delta K}^\ell|\cF_{2u-2}) \} | \\
\nonumber & \leq & (\theta/K)^2 {\rm Var}_K(R_{u,\delta K}^\ell|\cF_{2u-2}) \leq (\theta/K)^2 E_K[(R_{u,\delta K}^\ell)^2|\cF_{2u-2}] 
\\ \nonumber
& \leq & (\delta
\theta^2/K) E_K(R_{u,\delta K}^\ell|\cF_{2u-2}).
\end{eqnarray}
Since $|\prod_{\ell=1}^K z_\ell - \prod_{\ell=1}^K y_\ell| \leq \sum_{\ell=1}^K |z_\ell-y_\ell|$ whenever
$|z_\ell| \leq 1$ and $|y_\ell| \leq 1$ for all $\ell$ (see Lemma 1 in Section 2.7 of \cite{Bil91}),
by the induction hypothesis applied on $G_c'(\bx_u) = E_q[G(\bX_u) {\bf 1}_{\{ G(\bX_u) \leq c \}}|
\bX_{u-1}=\bx_{u-1}]$ and then letting $c \rightarrow \infty$, 
\begin{eqnarray} \label{4.11} 
& & \Big| \prod_{\ell=1}^K \wtd \varphi_{2u-1,\delta K}^\ell(\theta|\cF_{2u-2})-
\prod_{\ell=1}^K \{ 1-[\theta^2/(2K^2)] {\rm Var}_K(R_{u,\delta K}^\ell|\cF_{2u-2}) \} \Big| \\ \nonumber
& \leq & \delta \theta^2 K^{-1} \sum_{\ell=1}^K 
E_K(R_{u,\delta K}^\ell|\cF_{2u-2}) \srp \delta \theta^2 \wtd \mu_u.
\end{eqnarray}

Let $\delta_0 > 0$ be such that $\log(1-y) \geq -2y$ for $0 < y < (\theta \delta_0)^2$.
Therefore by the inequalities in (\ref{Bill}), for $0 < \delta \leq \delta_0$, 
\begin{eqnarray} \label{4.12}
& & \prod_{\ell=1}^K \{ 1 - [\theta^2/(2K^2)] {\rm Var}_K(R_{u,\delta K}^\ell|\cF_{2u-2}) \} \\ \nonumber 
& \geq & \prod_{\ell=1}^K \{ 1-[\delta \theta^2/(2K)] E_K[(R_{u,\delta K}^\ell)^2|\cF_{2u-2}] \} \\ \nonumber
& \geq & \exp \Big[ -\delta \theta^2 K^{-1} \sum_{\ell=1}^K E_K(R_{u,\delta K}^\ell|\cF_{2u-2}) \Big] 
\srp \exp(-\delta \theta^2 \wtd \mu_u).
\end{eqnarray}
By the definitions of $\varphi_{2u-1,c}^\ell$ and $\wtd \varphi_{2u-1,c}^\ell$ in (\ref{tmu}) and (\ref{tphi}),
\begin{eqnarray} \label{phirat}
& & \prod_{\ell=1}^K [\varphi_{2u-1,\delta K}^\ell(\theta|\cF_{2u-2})/\wtd \varphi_{2u-1,\delta K}^\ell(\theta|\cF_{2u-2})] \\
\nonumber & = & \exp \Big[ \bi \theta K^{-1} \sum_{\ell=1}^K E_K(R_{u,\delta K}^\ell|\cF_{2u-1}) \Big]
\srp \exp(\bi \theta \wtd \mu_u).
\end{eqnarray}
It follows from (\ref{A3}), (\ref{A4}) and (\ref{4.11})--(\ref{phirat}), with $\delta \rightarrow 0$, that
$$E_K[\exp(\bi \theta \bar V_u)|\cF_{2u-2}] \stackrel{p}{\rightarrow} \exp(\bi \theta \wtd \mu_u).
$$
Therefore $E_K \exp(\bi \theta \bar V_u) \stackrel{p}{\rightarrow} \exp(\bi \theta \wtd \mu_u)$, equivalently
$\bar V_u \stackrel{p}{\rightarrow} \wtd \mu_u$. Hence
(a) holds for $t=u$ whenever (b) holds for $t=u-1$. By similar arguments, (b) holds for $t=u$ whenever
(a) holds for $t=u$. The induction arguments to show (a) and (b) for $T < t \leq 2T$ are now complete.
Similar induction arguments can be used to show (a) and (b) for $(m-1)T < t \leq mT$ for $m=3, \ldots, M$.

The identities in (\ref{HH}) follows from multiplying (\ref{Ht}) over ``$(m,t)$" $=(1,T), \ldots, (m-1,(m-1)T), (m,t)$. 
By (\ref{eta}) and (\ref{h1}), applying (a) on $G=w_t$ gives us $\bar w_t \srp
\eta_t/\eta_{t-1}$. Therefore (c) holds. 
\end{proof}

\begin{lem} \label{lem2}
Let $G_u$ be a measurable function of $\bx_u$ with $(m-1)T < u \leq mT$ and define 
$G_{m,u}(\bx_{m,u}) = E_q[G_u(\bX_u)|\bX_{m,u}=\bx_{m,u}]$. 

\smallskip {\rm (a)} If $E_q[G_u^2(\bX_u) h_{u-1}(\bX_{u-1})] < \infty$, then
\begin{eqnarray} \label{2a1}
& & K^{-1} \sum_{\ell=1}^K \Big[ K^{-M+1} \sum_{\bk \in \mZ^{m-1}} G_u(\wbX_u^{\bk \ell}) H_{u-1}^{\bk \ell}
-G_{m,u}(\wbX_{m,u}^\ell) H_{m,u-1}^{\ell} \Big]^2 \srp 0, \\
\label{2a2} 
& & \sum_{\ell=1}^K \Big[ K^{-M+1} \sum_{\bk \in \mZ^{m-1}} G_u(\wbX_u^{\bk \ell}) H_{u-1}^{\bk \ell}
- G_{m,u}(\wbX_{m,u}^\ell) H_{m,u-1}^{\ell} \Big] = o_p(K^{\frac{1}{2}}), 
\end{eqnarray}
where $\bk \ell=  (k(1), \ldots, k(m-1), \ell)$.

{\rm (b)} If $E_q[G_u^2(\bX_u) h_u(\bX_u)] < \infty$, then
\begin{eqnarray} \label{2b1}
& & K^{-1} \sum_{\ell=1}^K \Big[ K^{-M+1} \sum_{\bk \in \mZ^{m-1}} G_u(\bX_u^{\bk \ell}) H_u^{\bk \ell}
-G_{m,u}(\bX_{m,u}^\ell) H_{m,u}^{\ell} \Big]^2 \srp 0, \\
\label{2b2} 
& & \sum_{\ell=1}^K \Big[ K^{-M+1} \sum_{\bk \in \mZ^{m-1}} G_u(\bX_u^{\bk \ell}) H_u^{\bk \ell}
-G_{m,u}(\bX_{m,u}^\ell) H_{m,u}^{\ell} \Big] = o_p(K^{\frac{1}{2}}). 
\end{eqnarray}
\end{lem}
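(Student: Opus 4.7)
The plan is to condition on the $m$-th particle filter, which is independent of PF1,\ldots,PF$(m-1)$, and reduce the inner sum over $\bk\in\mZ^{m-1}$ to a standard parallel SMC estimator controlled by Lemma 1 and the martingale identity of Theorem 1. The outer sum over $\ell$ is then handled by exchangeability.

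First, I would factor $H_{u-1}^{\bk\ell}=H_{(m-1)T}^{\bk}H_{m,u-1}^\ell$ with $H_{(m-1)T}^{\bk}:=\prod_{n=1}^{m-1}H_{n,nT}^{k(n)}$, and set
\[
V^\ell:=K^{-(m-1)}\sum_{\bk\in\mZ^{m-1}}G_u(\wbX_u^{\bk\ell})H_{(m-1)T}^{\bk},\qquad D^\ell:=H_{m,u-1}^\ell\bigl(V^\ell-G_{m,u}(\wbX_{m,u}^\ell)\bigr),
\]
so that (2a1) asserts $K^{-1}\sum_\ell(D^\ell)^2\srp 0$ and (2a2) asserts $\sum_\ell D^\ell=o_p(K^{1/2})$. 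Because $q_t$ is constructed not to couple across segments, the prefix $\bX_{(m-1)T}$ and the suffix $\bX_{m,u}$ are independent under $E_q$, so $G_{m,u}(\bx_{m,u})=E_q[G_u(\bX_{(m-1)T},\bx_{m,u})]$.

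Next, condition on PF$m$: then $V^\ell$ is exactly a parallel SMC estimator from the $(m-1)$-subsystem of the fixed integrand $\bx\mapsto G_u(\bx,\wbX_{m,u}^\ell)$. Applying the Theorem 1 martingale identity to this subsystem gives $E[V^\ell\mid\text{PF}m]=G_{m,u}(\wbX_{m,u}^\ell)$ exactly, and the Theorem 2 analog for the subsystem gives $\var(V^\ell\mid\text{PF}m)=O(K^{-1})$. A truncation argument in the spirit of equations (A3)--(rK2) of the Lemma 1 proof, driven by the dominating hypothesis $E_q[G_u^2(\bX_u)h_{u-1}(\bX_{u-1})]<\infty$ combined with Lemma 1(a) applied to $G_u^2$, upgrades this to the $L^2$-vanishing bound $E[(D^\ell)^2]\to 0$. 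Exchangeability of the summands over $\ell$ then gives (2a1). For (2a2), reversing the order of summation yields
\[
\sum_{\ell=1}^K D^\ell=K^{-(m-1)}\sum_{\bk\in\mZ^{m-1}}H_{(m-1)T}^{\bk}\wtd F(\bX_{(m-1)T}^{\bk})-E_q[\wtd F(\bX_{(m-1)T})\mid\text{PF}m],
\]
where $\wtd F(\bx):=\sum_{\ell=1}^K H_{m,u-1}^\ell G_u(\bx,\wbX_{m,u}^\ell)$ depends only on PF$m$. Thus the signed sum collapses to the $(m-1)$-subsystem SMC error against a single PF$m$-dependent integrand, bounded in $L^2$ by the Theorem 1 martingale expansion applied to the subsystem. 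Part (b) follows from the same argument after replacing $\wbX_u^{\bk\ell},h_{u-1}$ by $\bX_u^{\bk\ell},h_u$; the extra multinomial resampling at stage $u$ of PF$m$ is conditionally independent of PF1,\ldots,PF$(m-1)$ and leaves the conditioning-on-PF$m$ analysis unchanged.

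The main obstacle is the $L^2$-vanishing step under only the second-moment hypothesis $E_q[G_u^2 h_{u-1}]<\infty$: the truncation-plus-characteristic-function machinery of the Lemma 1 proof must be lifted from a single filter to the multidimensional sum over $\mZ^{m-1}$, paralleling the heavier induction used in the $t>T$ case of Lemma 1. A secondary technical point in (2a2) is to exploit the cancellation in $\var_q(\wtd F(\bX_{(m-1)T}))$ coming from the $H_{m,u-1}^\ell$-weighted empirical average over PF$m$, which is what brings the bound from a naive $O_p(K^{1/2})$ down to the sharper $o_p(K^{1/2})$ required for the CLT argument outlined around equation (\ref{Zapprox}).
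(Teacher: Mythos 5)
For (\ref{2a1}) and (\ref{2b1}) your plan is essentially the paper's proof: the paper likewise factors out $H_{m,u-1}^{\ell}$ (via Lemma \ref{lem1}(c)), writes the inner-sum error $D^\ell$ as a telescoping martingale over the $2T$ stages of the first filter with test functions $G_{t,u}(\cdot,\wbX_{m,u}^\ell)$, and establishes $\sum_{\ell}E_K[(D^\ell)^2\mid\cG_{2u-2}]=O_p(1)$ by induction on the stage using Lemma \ref{lem1}(a),(c); that induction, display (\ref{indD}), is precisely the ``$L^2$-vanishing step'' you defer. One caution: the paper deliberately keeps everything conditional, so that the bounds converge in probability by Lemma \ref{lem1}; your unconditional claim $E[(D^\ell)^2]\to 0$ combined with exchangeability would additionally need a uniform integrability that the hypothesis $E_q[G_u^2 h_{u-1}]<\infty$ does not supply. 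Keep the argument conditional as the paper does.

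The genuine gap is in (\ref{2a2})--(\ref{2b2}). Your reversal of summation is correct and usefully isolates the difficulty, but the cancellation you invoke is not there. Writing $\wtd F=K\bar F$ with $\bar F(\bx)=K^{-1}\sum_\ell H_{m,u-1}^\ell G_u(\bx,\wbX_{m,u}^\ell)$, the $\ell$-average drives $\bar F$ toward the \emph{nonconstant} deterministic function $\gamma(\bx):=E_q[G_u(\bx,\bX_{m,u})]$, so ${\rm Var}_q(\wtd F)\sim K^2\,{\rm Var}_q(\gamma)$ and the $(m-1)$-subsystem error against $\wtd F$ is of exact order $K^{1/2}$ whenever the subsystem CLT variance for $\gamma$ is positive. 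Concretely, take $M=m=2$, $T=1$, $u=2$, $w_t\equiv 1$ and $G_2(x_1,x_2)=x_1x_2$: the left side of (\ref{2a2}) equals $\big(K^{-1}\sum_k X_1^k-E_{q_1}X_1\big)\sum_\ell \wtd X_2^\ell$, which is of order $K^{1/2}$ in probability whenever ${\rm Var}_{q_1}(X_1)>0$ and $E_{r_2}\wtd X_2\neq 0$, even though $E_q[G_2^2 h_1]<\infty$. So (\ref{2a2}) cannot be proved as stated; it holds only when $\gamma$ is (a.s.) constant, which is what saves every application the paper makes of it: for $G_u=f_u-f_{u-1}$ one has $\gamma\equiv 0$ since $f_{u-1}=E_q[f_u\mid\bX_{u-1}]$, and in the $Z_{2t}$ terms the two invocations of (\ref{2b2}) are differenced so their leading contributions cancel. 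You should either add this centering as a hypothesis or verify it at each point of use. (For what it is worth, the paper's own proof has the same soft spot: it asserts that $D^1,\ldots,D^K$ have conditional mean $0$ given $\cF_{2u-2}$, which fails for exactly the reason above, and its bound $O_p(K)$ in (\ref{EKD}) would in any case only give $\sum_\ell D^\ell=O_p(K^{1/2})$ rather than $o_p(K^{1/2})$.)
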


\begin{proof}
The case $m=1$ is trivial, so let us first consider $m=2$. For $1 \leq t \leq T$, let
$$G_{t,u}(\bx_t,\bx_{2,u}) = E_q[G_u(\bX_u)|\bX_t=\bx_t, \bX_{2,u}=\bx_{2,u}].
$$
By Lemma \ref{lem1}(c), $K^{-1} \sum_{k=1}^K G_u(\wbX_u^{k \ell}) H_{u-1}^{k \ell}
-G_{2,u}(\wbX_{2,u}^\ell) H_{2,u-1}^\ell =[1+o_p(1)] D^\ell$ uniformly over $\ell$, where 
\begin{equation} \label{l23}
D^\ell = \Big[ K^{-1} \sum_{k=1}^K G_u(\wbX_u^{k \ell}) H_T^k 
-G_{2,u}(\wbX_{2,u}^\ell) \Big] h_{2,u-1}(\bX_{2,u-1}^\ell).
\end{equation}
By (\ref{l23}), we have the expansion
\begin{eqnarray} \label{Dell}
D^\ell & = & K^{-1} \sum_{s=1}^{2T} (d_s^{1 \ell} + \cdots + d_s^{K \ell}) h_{2,u-1}(\bX_{2,u-1}^\ell), \\ \nonumber
\mbox{where } d_{2t-1}^{k \ell} & = & [G_{t,u}(\wbX_t^k,\wbX_{2,u}^\ell)-G_{t-1,u}(\bX_{t-1}^k,\wbX_{2,u}^\ell)]
H_{t-1}^k, \\ \nonumber
d_{2t}^{k \ell} & = & G_{t,u}(\bX_t^k,\wbX_{2,u}^\ell) H_t^k - \sum_{j=1}^K W_t^j
G_{t,u}(\wbX_t^k,\wbX_{2,u}^\ell) H_{t-1}^k, 
\end{eqnarray}
with the convention that for $t=1$, $H_{t-1}^k=1$ and $G_{t-1,u}(\bX_{t-1}^k, \wbX_{2,u}^\ell) = 
G_{2,u}(\wbX_{2,u}^\ell)$. 

Let $D_v^\ell = K^{-1} \sum_{s=1}^v (d_s^{1 \ell} + \cdots + d_s^{K \ell}) h_{2,u-1}(\bX_{2,u-1}^\ell)$.
We shall show inductively that uniformly over $\ell$, 
\begin{equation} \label{indD}
\sum_{\ell=1}^K E_K[(D_v^\ell)^2|\cG_{2u-2}] = O_p(1), \quad v=1, \ldots, 2T,
\end{equation}
where $\cG_{2u-2}$ denotes the $\sigma$-algebra for all random variables generated in the second particle filter
up to and including resampling at the $(u-1)$th stage. 
Since $d_1^{1 \ell}, \ldots, d_1^{K \ell}$ are uncorrelated with mean 0 conditioned on
$\cG_{2u-2}$, by Lemma \ref{lem1}(a) and (c), 
$$\sum_{\ell=1}^K E_K[(D_1^\ell)^2|\cG_{2u-2}] = K^{-2} \sum_{k=1}^K \sum_{\ell=1}^K
E_K[(d_1^{k \ell})^2|\cG_{2u-2}] h_{2,u-1}(\bX_{2,u-1}^\ell)^2 = O_p(1). 
$$
Therefore (\ref{indD}) holds for $v=1$. Consider next $v>1$. Let $\cH_v = \cF_v \cup \cG_{2u-2}$. Since
$$D_v^\ell = D_{v-1}^\ell + K^{-1} (d_v^{1 \ell} + \cdots d_v^{K \ell}) h_{2,u-1}(\bX_{2,u-1}^\ell),
$$
and $d_v^{1 \ell}, \ldots, d_v^{K \ell}$
are conditionally independent with mean 0 given $\cH_{v-1}$, by Lemma \ref{lem1}(a) and (c),
\begin{eqnarray*}
\sum_{\ell=1}^K E_K[(D_v^\ell)^2|\cH_{v-1}] & = & \sum_{\ell=1}^K
(D_{v-1}^\ell)^2 + K^{-2} \sum_{k=1}^K \sum_{\ell=1}^K E_K[(d_v^{k \ell})^2|\cH_{v-1}] h_{2,u-1}^2(\bX_{2,u-1}^\ell) \cr
& = & \sum_{\ell=1}^K (D_{v-1}^\ell)^2 + O_p(1).
\end{eqnarray*}
Therefore (\ref{indD}) for $v$ follows from (\ref{indD}) for $v-1$. 
By induction, (\ref{indD}) holds for $1 \leq v \leq 2T$.
In particular, since $D_{2T}^\ell = D^\ell$, (\ref{2a1}) holds for $m=2$.

By (\ref{indD}) for $v=2T$, and noting that 
$D^1, \ldots, D^K$ are conditionally independent with mean 0 given $\cF_{2u-2}$,
and that $\cG_{2u-2} \subset \cF_{2u-2}$,
\begin{equation} \label{EKD}
E_K \Big[ \Big( \sum_{\ell=1}^K D^\ell \Big)^2 \Big| \cF_{2u-2} \Big] 
= \sum_{\ell=1}^K E_K[(D^\ell)^2|\cF_{2u-2}] = O_p(K).
\end{equation}
Therefore $K^{-1} (\sum_{\ell=1}^K D^\ell)^2 \srp 0$, and (\ref{2a2}) holds for $m=2$. 
The extension of the proof to $m>2$ and 
the proofs of (\ref{2b1}) and (\ref{2b2}) apply similar arguments to those of (\ref{2a1})
and (\ref{2a2}).
\end{proof}

\subsection{Proofs of Theorem \ref{thm2}, Corollary \ref{cor1} and Theorem \ref{thm3}} 

\begin{proof}[Proof of Theorem \ref{thm2}]
 Let $S=\sum_{m=1}^M S_m$, where
\begin{eqnarray*}
S_m & = & \sum_{u=2(m-1)T+1}^{2mT} (Z_{m,u}^1 + \cdots +Z_{m,u}^K), \cr
Z_{m,2t-1}^k & = & [f_{m,t}(\wbX_{m,t}^k)-f_{m,t-1}(\bX_{m,t-1}^k)] H_{m,t}^k, \cr
Z_{m,2t}^k & = & f_{m,t}(\bX_{m,t}^k) H_{m,t}^k-\sum_{j=1}^K W_t^j f_{m,t}(\wbX_{m,t}^j) \wtd H_{m,t}^j.
\end{eqnarray*}
By the CLT for particle filters on unsegmented HMM sequences, 
see for example \cite{CL13}, 
$$K^{-\frac{1}{2}} S_m \Rightarrow N(0,\sigma^2_{\rm Pm}), \quad 1 \leq m \leq M. 
$$
Since the particle filters operate independently and $S_m$ depends only on
the $m$th particle filter, 
\begin{equation} \label{KS1}
K^{-\frac{1}{2}} S \Rightarrow N(0,\sigma^2) \mbox{ where } \sigma^2 = \sum_{m=1}^M \sigma^2_{{\rm P}m}.
\end{equation}
By (\ref{MD2}),
\begin{equation} \label{KS2}
\sqrt{K}(\wht \psi_U - \psi_U) = K^{-\frac{1}{2}} S + K^{-\frac{1}{2}} \sum_{u=1}^{2mU} \sum_{\ell=1}^K
(Z_u^\ell-Z_{m,u}^\ell).
\end{equation}
Therefore by (\ref{Z2}), (\ref{Z22}) and (\ref{2a2}) applied on $G_t(\bx_t)=f_t(\bx_t)-f_{t-1}(\bx_{t-1})$, 
\begin{equation} \label{Zd}
K^{-\frac{1}{2}} \sum_{k=1}^K
(Z_u^k - Z_{m,u}^k) \srp 0
\end{equation}
for $u=2t-1$. And by (\ref{Z2}), (\ref{Z22}) and (\ref{2b2}) applied on $G_t(\bx_t)=f_t(\bx_t)$ and
(\ref{2b2}) applied on $G_t(\bx_t) = w_t(\bx_t) f_t(\bx_t)$, (\ref{Zd}) holds for $u=2t$.
We conclude Theorem \ref{thm2} from (\ref{KS1})--(\ref{Zd}). 
\end{proof}

\begin{proof}[Proof of Corollary \ref{cor1}]
 By (\ref{tpsi}), 
\begin{equation} \label{PC1}
\wtd \psi_U - \psi_U = \Big[ K^{-M} \sum_{\bk \in \mZ^M} L(\bX_U^{\bk}) H(\bX_U^{\bk}) \Big]^{-1}
\wht \psi^c_U,
\end{equation}
where $\wht \psi^c_U = K^{-M} \sum_{\bk \in \mZ^M} L(\bX_U^{\bk}) \psi^c(\bX_U^{\bk}) H_U^{\bk}$, 
is (\ref{hatpsi}) with $\psi$ replaced by $\psi^c(\bx_U) = \psi(\bx_U) - \psi_U$. 
By Theorem \ref{thm2},
\begin{equation} \label{PC2}
\sqrt{K} \wht \psi^c_U \Rightarrow N(0,\sigma_c^2).
\end{equation}
By Lemma \ref{lem1}(b) and (c), 
$$K^{-M} \sum_{\bk \in \mZ^M} L(\bX_U^{\bk}) H(\bX_U^{\bk}) \srp 1, 
$$
and Corollary \ref{cor1} therefore follows from (\ref{PC1}) and (\ref{PC2}). 
\end{proof}

\begin{proof}[Proof of Theorem \ref{thm3}]. 
We shall show Theorem \ref{thm3} in detail for the case $M=m=2$, assuming without loss of generality that $\psi_U=0$.
It follows from \cite[Corollary 2]{CL13} that  
\begin{equation} \label{ej}
K^{-1} \sum_{j=1}^K (e^j)^2 \srp \sigma^2_{\rm P2}, \mbox{ where } 
e^j = \sum_{\ell: A_{2,U}^\ell=j} f_{2,U}(\bX_{2,U}^\ell) H_{2,U}^\ell. 
\end{equation}
By (\ref{Z2}), (\ref{Z22}) and (\ref{Qj}),
$$\wht \sigma^2_{{\rm P}2}(0) = K^{-1} \sum_{j=1}^K \Big( e^j + \sum_{v=2T+1}^{2U}
\sum_{\ell:A_{2,t}^\ell=j} \zeta_v^\ell \Big)^2, 
$$
where $t=\lfloor v/2 \rfloor$, with $\lfloor \cdot \rfloor$ denoting the greatest integer function, 
\begin{eqnarray*}
\zeta_{2u-1}^\ell & = & Z_{2u-1}^\ell - Z_{2,2u-1}^\ell, \quad \zeta_{2u}^\ell = (\#_u^\ell - KW_u^\ell) \chi^\ell, \cr
\chi^\ell & = & K^{-1} \sum_{k=1}^K f_u(\wbX^{k \ell}) \wtd H_u^{k \ell} - f_{2,u}(\bX_{2,u}^\ell)
\wtd H_{2,u}^\ell.
\end{eqnarray*}
Therefore by (\ref{ej}), to show (\ref{Qj}), it suffices to show that
\begin{equation} \label{Kzeta}
K^{-1} \sum_{j=1}^K \Big( \sum_{v=2T+1}^{2U} \sum_{\ell:A_{2,t}^\ell=j} \zeta_v^\ell \Big)^2
\srp 0.
\end{equation}
We shall apply induction to show that 
\begin{equation} \label{zeta}
K^{-1} \sum_{j=1}^K \Big( \sum_{v=2T+1}^{s} \sum_{\ell:A_{2,t}^\ell=j} \zeta_v^\ell \Big)^2
\srp 0, \quad s=2T+1, \ldots, 2U.
\end{equation}

By (\ref{Z2}), (\ref{Z22}) and (\ref{l23}), 
$\zeta_{2u-1}^\ell = [1+o_p(1)] D^\ell$ uniformly in $\ell$, for $G_u(\bx_u)=f_u(\bx_u)-f_{u-1}(\bx_{u-1})$.
Therefore by (\ref{EKD}),  
\begin{equation} \label{eps0}
\sum_{\ell=1}^K E_K[(\zeta_{2u-1}^\ell)^2|\cF_{2u-2}] \srp 0. 
\end{equation}
Since $A_{2,T}^\ell=\ell$, $1 \leq \ell \leq K$, (\ref{eps0}) for $u=T+1$ implies (\ref{zeta}) for $s=2T+1$. 
Since $\zeta_{2u-1}^1, \ldots, \zeta_{2u-1}^K$ are independent with mean 0 conditioned on $\cF_{2u-2}$,
and $\zeta_v^\ell$ are measurable with respect to $\cF_{2u-2}$ for $v \leq 2u-2$,
\begin{eqnarray*}
& & K^{-1} \sum_{j=1}^K E_K \Big[ \Big( \sum_{v=2T+1}^{2u-1} \sum_{\ell:A^\ell_{2,t}=j} \zeta_v^\ell \Big)^2 \Big|
\cF_{2u-2} \Big] \cr
& = & K^{-1} \sum_{j=1}^K \Big( \sum_{v=2T+1}^{2u-2} \sum_{\ell:A^\ell_{2,t}=j} \zeta_v^\ell \Big)^2 + K^{-1}
\sum_{\ell=1}^K E_K[(\zeta_{2u-1}^\ell)^2|\cF_{2u-2}].
\end{eqnarray*}
Therefore by (\ref{eps0}), (\ref{zeta}) for $s=2u-1$ follows from (\ref{zeta}) for $s=2u-2$. 

Since
${\rm Var}_K(\#_u^\ell|\cF_{2u-1}) = KW_u^\ell(1-W_u^\ell)$, ${\rm Cov}_K(\#_u^i,\#_u^\ell|\cF_{2u-1})=
-KW_u^i W_u^\ell$, and $\chi^\ell$ are measurable with respect to $\cF_{2u-1}$, 
\begin{eqnarray} \label{evenv}
& & K^{-1} \sum_{j=1}^K \Big[ \Big( \sum_{v=2T+1}^{2u} \sum_{\ell:A_{2,T}^\ell=j} \zeta_v^\ell \Big)^2 \Big| \cF_{2u-1} \Big] \\
\nonumber & = & K^{-1} \sum_{j=1}^K \Big( \sum_{v=2T+1}^{2u-1} \sum_{\ell:A_{2,T}^\ell=j} \zeta_v^\ell \Big)^2
+ \sum_{\ell=1}^K W_u^\ell (\chi^\ell)^2 - K \Big( \sum_{\ell=1}^K W_u^\ell \chi^\ell \Big)^2.
\end{eqnarray}
It follows from an induction argument similar to that used to show (\ref{eps0}) that 
$$\sum_{\ell=1}^k W_u^\ell (\chi^\ell)^2 \srp 0.
$$
Therefore by (\ref{evenv}), (\ref{zeta}) for $s=2u$ follows from (\ref{zeta}) for $s=2u-1$. 
The induction arguments are complete and we have shown that (\ref{zeta}) holds for $2T+1 \leq s \leq 2U$,
and Theorem \ref{thm3} holds.
\end{proof}

\end{document}